\theoremstyle{plain}
\newtheorem{theorem}{Theorem}
\newtheorem{lemma}{Lemma}
\newtheorem{proposition}{Proposition}
\newtheorem*{claim}{Claim}
\theoremstyle{definition}
\newcommand{\fr}{\mathfrak}
\def \Q{\mathbb{Q}}
\def \C {\mathbb{C}}
\def \R {\mathbb{R}}
\def\H {\mathbb H}
\def\Z{\mathbb Z}
\def \qpbar {\overline{\Q}_p}
\def\gp{\mathfrak{p}}
\def \calh {\mathcal{H}}
\def \call {\mathcal{L}}
\def \calo {\mathcal{O}}
\def\id{1\!\!1}
\def\Hom{\text{Hom}}
\def\GL{\text{GL}}
\def\SL{\mathrm{SL}}
\def\SU{\mathrm{SU}}
\def\det{\text{det}}
\def\Tr{\text{Tr}}
\def\Nm{\text{Nm}}
\def\f{\mathbf{f}}
\def\ulk{\underline k}
\def\uln{\underline n}
\def\ulz{\underline z}
\title{$p$-adic Asai $L$-functions attached to Bianchi cusp forms}
\author[B. Balasubramanyam]{Baskar Balasubramanyam}
\address{
Indian Institute of Science Education and Research \\
Pashan, Pune 411021, India.
}
\email{baskar@iiserpune.ac.in}
\author[E. Ghate]{Eknath Ghate}
\address{
School of Mathematics \\
Tata Institute of Fundamental Research \\
Homi Bhabha Road, Mumbai 400005, India.
}
\email{eghate@math.tifr.res.in}
\author[Ravitheja Vangala]{Ravitheja Vangala} 
\address{
School of Mathematics \\
Tata Institute of Fundamental Research \\
Homi Bhabha Road, Mumbai 400005, India.
}
\email{ravithej@math.tifr.res.in}
\begin{document}
\begin{abstract}
We establish a rationality result for the twisted Asai $L$-values attached to a Bianchi cusp form and construct distributions interpolating 
these $L$-values. Using the method of abstract Kummer congruences, we then outline the main steps needed to show that these distributions 
come from a measure.
\end{abstract}
\maketitle

\section{Preliminaries}

Let $F$ be an imaginary quadratic field with ring of integers $\calo_F$. Write $F= \Q(\sqrt{-D})$ with $D>0$ and $-D$ the discriminant of $F$. 
Let $S_{\ulk} (\fr n)$ denote the space of Bianchi cusp forms of weight $\ulk = (k,k)$, $k \geq 2$, and level $\fr n$ and central character with 
trivial finite part and infinity type $(2-k,2-k)$.   
Let $\f \in S_{\ulk} (\fr n)$ be a normalized eigenform and let $c(\fr m, \f)$ be the Fourier coefficients of $\f$, for any integral ideal $\fr m \subset \calo_F$. The eigenform $\f$ corresponds to a tuple $(f_1, \dots, f_h)$ of classical Bianchi cusp forms, where $h$ is the class number of $F$. We take $f = f_1 \in S_{\ulk} (\Gamma_0 (\fr n))$ and only focus on this since the Asai $L$-function depends only on $f_1$.

Let $c(r)$, for $r \geq 1$, denote $c((r), f)$. Define the Asai $L$-function of $f$ by the formula
$$
G(s, f) = L_N (2s-2k+2, \id) \sum_{r=1}^\infty \frac{c(r)}{r^s},
$$
where $N$ is the positive generator of the ideal $\fr n \, \cap \, \Z$ and $L_N (s, \id)$ is the  $L$-function attached to the trivial character modulo $N$. 
The special values of this function are investigated in \cite{Gh}. A generalization to cusp forms defined over CM fields can be found 
in \cite{Ghb}.

Let $p \in \Z$ be an odd prime integer that is relatively prime to $N$ and that is also unramified in $F$. 
Let $\chi : (\Z/p^j \Z)^\times \to \C^\times$ be a 
Dirichlet character with conductor dividing $p^j$. Define the twisted Asai $L$-function of $f$ by the formula
$$
G(s,\chi, f) = L_N (2s-2k+2, \chi^2)  \sum_{r=1}^\infty \frac{c(r) \chi(r)}{r^s}.
$$
This has an Euler product expansion
$$
G(s, \chi, f) = \prod_p {G_p (s, \chi, f)},
$$
where the local $L$-functions at all but finitely many primes are described as follows. Let $l \ne p$ be an integer prime, not dividing $N$. 
For any $\fr l | l$, let $\alpha_1 (\fr l)$ and $\alpha_2 (\fr l)$ denote the reciprocal roots of the Hecke polynomial of $f$ at 
$\fr l$: $1 - c (\fr l, f) X + \Nm (\fr l)^{k-1} X^2 $. Then 
$$
\frac{1}{G_l (s,\chi, f)} = \left\{ 
\begin{array}{ll}
\prod_{i,j} (1 - \chi (l) \alpha_i (\fr l) \alpha_j (\bar{\fr l}) l^{-s}) & \textrm{if}\ l = \fr l \bar{\fr l}, \\
(1 - \chi (l) \alpha_1 (\fr l) l^{-s}) (1 - \chi^2 (l) l^{-2s+2k-2}) (1 - \chi (l) \alpha_2 (\fr l) l^{-s})   & \textrm{if}\ l = \fr l \textrm{ is inert}, \\
(1 - \chi (l) \alpha^2_1 (\fr l) l^{-s}) (1 - \chi (l) l^{-s+k-1}) (1 - \chi (l) \alpha^2_2 (\fr l) l^{-s})   & \textrm{if}\ l = {\fr l}^2 \textrm{ is ramified}. \\
\end{array} \right.
$$

We want to find `periods' and prove that the special values of the twisted Asai $L$-functions are algebraic after dividing by these periods. We also want to $p$-adically interpolate the special values of $G(s,\chi,f)$ as $\chi$ varies over characters of $p$-power conductor.

\bigskip

\section{Complex valued distributions}

Following Panchishkin, we now construct a complex valued distribution that is related to the twisted Asai $L$-function. 
This section basically follows Coates--Perrin-Riou~\cite{Coates-PR} and Courtieu--Panchishkin~\cite[\S 1.6]{CouPan}.

The function $G(s,f)$ has an Euler product formula
$$
G(s, f) = \prod_p {G_p (s, f)} = \sum_{r=1}^\infty \frac{d(r)}{r^s},
$$
and hence satisfies the hypothesis in the above references. We now assume that our fixed prime $p$ splits as $\gp \bar \gp$ in $F$. 
A similar argument will also work for $p$ inert. Then the local Euler factor at $p$ is of the form $G_p (s, f) = F(p^{-s})^{-1}$ where
$$
F(X) =  (1 - \alpha_1 (\fr p) \alpha_1 (\bar{\fr p}) X) (1 - \alpha_1 (\fr p) \alpha_2 (\bar{\fr p}) X)
 (1 - \alpha_2 (\fr p) \alpha_1 (\bar{\fr p}) X) (1 - \alpha_2 (\fr p) \alpha_2 (\bar{\fr p}) X).
$$
In what follows we shall assume that
$f$ is totally ordinary at $p$. Hence we may assume, by possibly switching the subscripts $i = 1$, $2$, that the inverse 
root $\kappa := \alpha_1 (\fr p) \alpha_1 (\bar{\fr p})$ of the polynomial $F(X)$ is 
a $p$-adic unit. Also define a polynomial $H(X)$ as
$$
H(X) =  (1 - \alpha_1 (\fr p) \alpha_2 (\bar{\fr p}) X)
 (1 - \alpha_2 (\fr p) \alpha_1 (\bar{\fr p}) X) (1 - \alpha_2 (\fr p) \alpha_2 (\bar{\fr p}) X).
$$
Let $B_0 = 1$ and define $B_1, B_2$ and $B_3$ such that
$$
H(X) = 1 + B_1 X + B_2 X^2 + B_3 X^3.
$$ 

Let $\chi : (\Z/p^j\Z)^\times \to \C^\times$ be a character with conductor $C_\chi = p^{j_\chi}$.   We want to define a complex valued 
distribution that interpolates the values of the twisted $L$-function:
$$
G(s,\chi, f) =   \sum_{r=1}^\infty \frac{d(r) \chi (r)}{r^s}.
$$

Define functions $P_s: \Q \to \C$ by the formula
$$
P_s (b) = \sum_{r=1}^\infty \frac{d(r) e^{2 \pi i r b}}{r^{s}}
$$
which converges absolutely for $\Re(s)$ sufficiently large. Define a distribution $\tilde \mu$ on $\Z_p^\times$ by the formula
$$
\tilde \mu_s (a + p^j \Z_p) =  \frac{p^{j (s-1)}}{\kappa^j} \sum_{i=0}^3 B_i P_s (a p^i/p^j) p^{-is}.
$$ 
We need to check that this satisfies the distribution relations. We will do this by showing that
\begin{equation}\label{eq: eta-value}
\sum_{a \mod p^j} \chi (a) \tilde \mu_s (a + p^j \Z_p)
\end{equation}
is independent of $j$ as long as $j \ge j_\chi$. For any character $\chi$ and integer $M$, define the generalized Gauss sum
$$
G_{M, p^j} = \sum_{a \mod p^j} \chi (a) e^{2\pi i a M/p^j }.
$$
It can be verified that 
$$
G_{M, p^j} = \left\{\begin{array}{cc}
p^{j - j_\chi} G(\chi) \bar \chi (M/p^{j - j_\chi})  & \mathrm{if}\ p^{j - j_\chi} | M \\
0 & \mathrm{otherwise,} 
\end{array} \right.
$$
where $G(\chi) = G_{1, p^{j_\chi}}$ is the Gauss sum of $\chi$.

From the definition, we can write the quantity in equation \eqref{eq: eta-value} as
\begin{align}\label{eq: calc-step1}
&   \frac{p^{j (s-1)}}{\kappa^j} \sum_{a \mod p^j} \chi(a) \sum_{i=0}^3 B_i p^{-is} \sum_{r=1}^\infty d (r) e^{2 \pi i a p^i r/p^j} r^{-s} \nonumber \\
& =  \frac{p^{j (s-1)}}{\kappa^j}  \sum_{i, r} B_i p^{-is}  d (r) r^{-s} G_{p^i r, p^j} \nonumber \\
& =  \frac{p^{js -j_\chi}}{\kappa^j} G(\chi) \sum_{i, r} B_i p^{-is}  d (r) r^{-s} \bar \chi \left(\frac{p^i  r}{p^{j-j_\chi}}\right) \id_\Z \left(\frac{p^i r}{p^{j - j_\chi}}\right). 
\end{align}
Here $\id_\Z$ is the characteristic function of integers. It appears since the only terms that contribute to the sum are those with $p^{j - j_\chi} | p^i r$ (this follows from the above formula for $G_{p^i r, p^j}$).  Now write each $r = r_1 r_2$, where $r_1$ is the $p$-power part and $r_2$ is the away from $p$-part of $r$ respectively. We know that 
\begin{align*}
& \bar \chi \left(\frac{p^i r}{p^{j - j_\chi}}\right) = \bar \chi (r_2) \bar \chi \left(\frac{p^i r_1}{p^{j - j_\chi}}\right), \>\> \mathrm{ and} \\
& \id_\Z \left(\frac{p^i r}{p^{j-j_\chi}}\right) = \id_\Z \left(\frac{p^i r_1}{p^{j-j_\chi}}\right).
 \end{align*}
Using  these in equation \eqref{eq: calc-step1}, we get the expression
\begin{equation}\label{eq: calc-step2}
 \frac{p^{js - j_\chi}}{\kappa^j} G(\chi) \left(\sum_{r_2} \bar \chi (r_2) d (r_2) r_2^{-s}\right) \sum_{i, r_1} B_i p^{-is}  d (r_1) r_1^{-s} \bar \chi \left(\frac{p^i r_1}{p^{j - j_\chi}}\right) \id_\Z \left(\frac{p^i r_1}{p^{j - j_\chi}}\right). 
\end{equation}
Note that here we have also used the fact that $d(r) = d(r_1) d(r_2)$. 

We also know that
$$
\sum_{r_1} d(r_1) r_1^{-s} = F(p^{-s})^{-1},
$$
where the sum is taking over all powers of $p$. Moreover, we also have
$$
(\sum_i B_i p^{-is}) F(p^{-s})^{-1} = H(p^{-s}) F(p^{-s})^{-1} = (1 - \kappa p^{-s})^{-1}.
$$
Hence
$$
\sum_{i, r_1} B_i p^{-is} d(r_1) r_1^{-s} = \sum_{r_3} \kappa^{\mathrm{ord}_p r_3} r_3^{-s},
$$ 
where the $r_3$ varies over all powers of $p$. We also have the relation
$$
\kappa^{\mathrm{ord}_p r_3} =  \sum_{r_3 = p^i r_1} B_i d(r_1).
$$
Hence setting $r_3 = p^i r_1$, we see that the only terms that contribute to the sum in equation \eqref{eq: calc-step2} are those $p$-powers $r_3$ of the form $p^{j - j_\chi} r_4$ for some  $p$-power $r_4$. Also note that as $r_2$ varies over all positive integers prime to $p$, we get
$$
\sum_{r_2} \bar \chi (r_2) d (r_2) r_2^{-s}
= {G(s, \bar \chi, f)}.
$$
We remark that if $\chi$ is the trivial character of $\Z_p^\times$, then $G(s, \bar \chi, f)$ is just the $p$-deprived Asai $L$-function 
$G_p(s,f)^{-1} G(s, f)$, where $G_p(s,f)$ is the local Euler factor at $p$, since the function on $\Z$ induced by the trivial character 
$\chi$ is taken to vanish on $p\Z$.  
In any case, we can rewrite equation \eqref{eq: calc-step2} as
\begin{align*}
& \frac{p^{js -j_\chi}}{\kappa^j} G(\chi) G(s, \bar \chi, f) \sum_{r_3} \kappa^{\mathrm{ord}_p r_3} r_3^{-s} \bar \chi \left(\frac{r_3}{p^{j-j_\chi}}\right) \id_\Z \left(\frac{r_3}{p^{j-j_\chi}}\right) \\
&= \frac{p^{js -j_\chi}}{\kappa^j} G(\chi) G(s, \bar \chi, f) \sum_{r_4} \kappa^{j - j_\chi} \kappa^{\mathrm{ord}_p r_4} p^{-s(j -j_\chi)}  r_4^{-s} \bar \chi (r_4) \\
&=  \frac{p^{j_\chi (s - 1)}}{\kappa^{j_\chi}} G(\chi) G(s, \bar \chi, f) \sum_{r_4} \kappa^{\mathrm{ord}_p r_4} r_4^{-s} \bar \chi (r_4) \\
&= \frac{p^{j_\chi (s - 1)}}{\kappa^{j_\chi}} G(\chi) G(s, \bar \chi, f),
\end{align*}
since $\bar \chi (r_4) = 0$, unless $r_4 = 1$, since by convention all Dirichlet characters of $\Z_p^\times$, including the trivial character, 
are thought of as functions on $\Z$ by requiring that they vanish on $p\Z$. 
This simultaneously checks the distribution relations and establishes the relationship 
\begin{equation}
\label{int of chi wrt mutilde}
\int \chi\ d\tilde \mu_s =   \frac{p^{j_\chi (s - 1)}}{\kappa^{j_\chi}} G(\chi) G(s, \bar \chi, f)
\end{equation}
between these measures and twisted Asai $L$-values.

We remark here that these calculations hold only for $s \in \C$ where $G(s, f)$ is absolutely convergent. And this is known for all $s$ such that $\Re (s) > k+1$, in view of the Hecke bound $c(\mathfrak{l},f) = O(\mathrm{Nm}(\mathfrak{l})^{k/2})$, for all but finitely many primes $\mathfrak{l}$ of $F$. 

In order to construct a measure, we need to show that this is a bounded distribution (after possibly dividing by some periods). We now modify the distribution $\tilde \mu_s$ to construct the distribution
$$
\mu_s (a + p^j \Z_p) = \tilde \mu_s (a + p^j \Z_p) + \tilde \mu_s (-a + p^j \Z_p).
$$
The distribution relations for $\mu_s$ follows from those of $\tilde \mu_s$. Moreover
$$
\int \chi\ d\mu_s = \left\{ \begin{array}{cl} 2 \int \chi\ d\tilde \mu_s & \textrm{if $\chi$ is even,} \\
0 & \textrm{if $\chi$ is odd}. \end{array}\right.
$$

In the next section, we shall prove that the values of the distribution $\int \chi\ d\mu_s$ in \eqref{int of chi wrt mutilde}, for specific values of $s$, are rational, 
after dividing by some periods.
In Section~\ref{section bounded distributions}, we conjecture that these values are even $p$-adically bounded.
We then conjecture that these values satisfy the so called abstract Kummer congruences, and hence come from a measure.
 
\bigskip

\section{Rationality result for twisted Asai $L$-values}

Let $n = k-2$ and set $\uln = (n,n)$. For any $\calo_F$-algebra $A$, let $L(\uln, A)$ denote the set of polynomials in $4$ variables $(X, Y, \overline X, \overline Y)$ with coefficients in $A$, which are homogeneous of degree $n$ in $(X,Y)$ and homogeneous of degree $n$ in $(\overline X, \overline Y)$. We define an action of $\SL_2 (A)$ on this set by 
\begin{equation*}
\begin{split}
\gamma \cdot P(X, Y, \overline X, \overline Y) &=  P(\gamma^\iota (X, Y)^t, \bar{\gamma}^\iota (\overline X, \overline Y)^t) \\
&=  P(dX - bY, -cX + aY, \bar d \, \overline X -  \bar b \,\overline Y, - \bar c \overline X + \bar a \overline Y),
\end{split}
\end{equation*}
for $\gamma = \bmatrix a & b \\ c & d \endbmatrix \in \SL_2 (A)$ and where $\gamma^\iota = \det (\gamma) \gamma^{-1}$ is the adjoint matrix of $\gamma$. Let $\Gamma_0 (\fr n)$ and $\Gamma_1 (\fr n)$ be the usual congruence subgroups of $\SL_2(\calo_F)$ with respect to the ideal $\fr n$.
Let 
$$
\calh = \{ (z,t) \> | \>  z \in \C \textrm{ and } t \in \R \textrm{ with } t >0 \}
$$
be the hyperbolic upper half-space in $\R^3$. There is an action of $\SL_2 (\calo_F)$ on $\calh$ which is induced by identifying $\calh$ with $\SL_2 (\C)/\SU_2 (\C) = \GL_2(\C)/[\SU_2 (\C) \cdot \C^\times]$. The last identification is given by a transitive action of $\SL_2 (\C)$ on $\calh$ defined via
$g \mapsto g \cdot \epsilon$, for $g \in \SL_2(\C)$ and $\epsilon = (0,1) \in \calh$. We will let $\mathcal L (\uln, A)$ denote the system of local coefficients associated to $L(\uln, A)$. So $\call (\uln, A)$ is the sheaf of locally constant sections of the projection 
$$
\Gamma_0 (\fr n) \backslash (\calh \times L(\uln, A)) \longrightarrow \Gamma_0 (\fr n) \backslash \calh.
$$
Analogous to the Eichler-Shimura isomorphism for classical cusp forms, there are isomorphisms
$$
\delta_q : S_{\ulk} (\Gamma_0(\fr n)) \longrightarrow H^q_{cusp} (\Gamma_0 (\fr n) \backslash \calh, \call (\uln, \C)),
$$
for $q = 1,2$. Here the cohomology on the right is cuspidal cohomology with local coefficients. 
We take $\delta = \delta_1$ since we are interested in $1$-forms. 

Let us now describe the image of $\delta (f)$ under this map. Let $\gamma \in \SL_2 (\C)$ and $\ulz = (z,t) \in \calh$. After identifying $\ulz$ with the matrix $\bmatrix z & -t \\ t & \bar z \endbmatrix$, recall that the action of $\gamma$ on $\ulz$ is by
$$
\gamma \cdot \ulz = [\rho (a) \ulz + \rho (b) ][\rho (c) \ulz + \rho (d)]^{-1},
$$
where $\rho (\alpha) = \bmatrix \alpha & 0 \\ 0 & \bar \alpha \endbmatrix$. Define the automorphy factor $j(\gamma, \ulz) = \rho(c) \ulz + \rho(d) \in \GL_2 (\C)$. Let $L(2n+2, \C)$ denote the space of homogeneous polynomials of degree $2n+2$ in two variables $(S,T)$ and coefficients in $\C$. We will consider $L(2n+2, \C)$ with a left action of $\SL_2 (\C)$.  

Recall that $f$ is a function $\calh \to L (2n+2, \C)$ that satisfies the transformation property
$$
f(\gamma \ulz, (S,T)) =  f(\ulz, {^t} j(\gamma, \ulz) (S,T)^t),
$$
for $\gamma \in \Gamma_0 (\fr n)$. There is a related `cusp form' $F: \SL_2 (\C) \to L(2n+2, \C)$ on $\SL_2 (\C)$ which is defined by the formula
$$
f(\ulz, (S,T)^t) = F(g, {^t} j(g, \epsilon) (S,T)^t),
$$
where $g \in \SL_2(\C)$ is chosen such that $g \cdot \epsilon = \ulz$. 

By Clebsch-Gordon, there is an $\SU_2 (\C)$-equivariant homomorphism
$$
\Phi: L(2n+2, \C) \hookrightarrow L(\uln, \C) \otimes L(2, \C).
$$
Then $\delta(f)$ can explicitly be described as \cite[(13)]{Gh}
\begin{equation}
  \label{def of delta(f)}
\delta(f) (g) = g \cdot (\Phi \circ F(g)), \quad \forall g \in \SL_2 (\C).
\end{equation}
Note that here the action of $g$ on $L(\uln, \C)$ is as described above. But the action of $\SL_2 (\C)$ on $L(2, \C)$ is identified with the natural 
action of $\SL_2(\C)$ on $\Omega^1 (\calh) = \C dz \, \oplus \, \C dt \, \oplus \, \C d \bar z$ (see \cite[(6)]{Gh}). The identification of $L(2,\C)$ with $\Omega^1 (\calh)$ is given by sending $A^2 \mapsto dz, AB \mapsto -dt$ and $B^2 \mapsto -d\bar z$. With this identification, we view $\delta(f)$ as a $L(\uln, \C)$ valued differential form. It is also invariant under the action of $\SU_2 (\C)$ \cite[(14)]{Gh}, so it descends to a vector valued $1$-form on $\calh$. Moreover, 
if $\gamma \in \Gamma_0 (\fr n)$, then
$$
\delta (f) (\gamma \ulz) = \gamma \cdot (\delta (f) (\ulz)).
$$
So $\delta (f)$ descends to an element of $H^1_{cusp} (\Gamma_0(\fr n) \backslash \calh, \call (\uln, \C))$. 

We make this formula more explicit. Let $U$ and $V$ be auxiliary variables and define the following homogeneous polynomial of degree $2n+2$  by
$$
Q = \left( {{2n+2} \choose \alpha} (-1)^{2n+2-\alpha} U^\alpha V^{2n+2 - \alpha} \right)_{\alpha = 0, \dots, 2n+2}.
$$
Define $\psi (X,Y, \overline X, \overline Y, A, B) = [\psi_0 (X,Y, \overline X, \overline Y, A, B), \dots, \psi_{2n+2} (X,Y, \overline X, \overline Y, A, B)]^t$ by the formula
$$
(XV - YU)^n (\overline X U + \overline Y V)^n (AV - BU)^2 = Q \cdot \psi,
$$
where each $\psi_i$ is a polynomial that is homogeneous of degree $n$ in $(X,Y)$, homogeneous of degree $n$ in $(\overline X, \overline Y)$ and homogeneous of degree $2$ in $(A,B)$. 

Let $\ulz \in \calh$, then since $\SL_2 (\C)$ acts transitively on $\calh$, there is a $g \in \SL_2 (\C)$ such that $\ulz = g \cdot \epsilon$ where $\epsilon = (0,1) \in \calh$. Let $F^\alpha : \SL_2 (\C) \to \C$, for $\alpha = 0, \dots, 2n+2$, 
be the components of the function $F : \SL_2 (\C) \to L(2n+2 , \C)$. Then 
$$
(\Phi \circ F) (g) = [F^0 (g), \dots, F^{2n+2} (g)] \cdot \psi (X,Y, \overline X, \overline Y, A, B)
$$
and
$$
\delta (f) (\ulz) = g \cdot (\Phi \circ F (g))  = [F^0 (g), \dots, F^{2n+2} (g)] \cdot \psi (g^\iota (X,Y)^t, \bar{g}^\iota (\overline X, \overline Y)^t, {^t} j(g^{-1}, \epsilon)^{-1} (A, B)^t),
$$
where $A^2, AB, B^2$ are replaced by $dz, -dt, -d\bar z$.

Now, let $\beta \in F \subset \C$ and let $T_\beta$ denote the translation map $\calh \to \calh$ given by sending $\ulz = (z,t) \mapsto (z+\beta, t)$. When we view $\calh$ as a quotient space of $\SL_2 (\C)$, this map is induced by sending the coset $g \SU_2 (\C) \mapsto \gamma_\beta g \SU_2(\C)$, where 
$$
\gamma_\beta := \bmatrix 1 & \beta \\ 0 & 1 \endbmatrix.
$$
Let $\Gamma_0^\beta (\fr n) := \gamma_\beta^{-1} \Gamma_0 (\fr n) \gamma_\beta \subset \SL_2 (F)$. Notice that if $\Gamma_0^\beta (\fr n) g^\prime \SU_2 (\C) = \Gamma_0^\beta (\fr n) g \SU_2 (\C)$, then $\Gamma_0 (\fr n) \gamma_\beta g^\prime \SU_2 (\C) = \Gamma_0 (\fr n) \gamma_\beta g \SU_2 (\C)$. 
Thus the translation map $T_\beta$ induces a well-defined map
$$
\Gamma_0^\beta (\fr n) \backslash \calh \stackrel{T_\beta}{\longrightarrow} \Gamma_0 (\fr n) \backslash \calh,
$$
which we again denote by $T_\beta$.

We now recall some basic facts about functoriality of cohomology with local coefficients. For $i = 1,2$, let $X_i$ be topological spaces 
with universal covers $\tilde X_i$ and fundamental groups $\Gamma_i$ (after fixing some base points). Let $M_i$ be local coefficient systems on $X_i$, 
i.e., each $M_i$ is an abelian group with an action of the fundamental group $\Gamma_i$.  Let $\phi : X_1 \to X_2$ be a map between the spaces, 
it induces a map $\phi_*: \Gamma_1 \to \Gamma_2$ on the fundamental groups. A map between the coefficient systems $\tilde \phi: M_2 \to M_1$ is 
said to be compatible with $\phi$ if it satisfies
$$
\gamma_1 \tilde \phi (m_2) = \tilde \phi (\phi_* (\gamma_1) m_2), \quad \forall m_2 \in M_2 \textrm{ and } \gamma_1 \in \Gamma_1.
$$
In other words, $\tilde \phi$ must be a map between representations when $M_2$ is viewed as a representation of $\Gamma_1$ via the map $\phi_*$. For any compatible pair $(\phi, \tilde \phi)$, there exists an induced map 
$$
\phi^*: H^q (X_2, M_2) \to H^q (X_1, M_1)
$$
at the level of cohomology. This map is constructed as follows. Let $S_* (\tilde X_i)$ denote the singular complex of the universal covers. There is a natural action on the right by $\Gamma_i$ via deck transformations. Given a singular $q$-simplex $\sigma : \Delta^q \to \tilde X_i$ and $g \in \Gamma_i$, we convert this right action into a left action by setting $ g \cdot \sigma = \sigma \cdot g^{-1}$. The cohomology groups  with local coefficients are given by the homology of the complex $\Hom_{\Z \Gamma_i} (S_* (\tilde X_i), M_i)$. The map $\phi^*$ is induced by the following map on the complexes (which we again denote by $\phi^*$)
$$
\phi^*: \Hom_{\Z \Gamma_2} (S_q (\tilde X_2), M_2) \to \Hom_{\Z \Gamma_1} (S_q (\tilde X_1), M_1).
$$
Given a cochain $C \in \Hom_{\Z \Gamma_2} (S_q (\tilde X_2), M_2)$ and $\tau : \Delta^q \to \tilde X_1 \in S_q (\tilde X_1)$, define 
\begin{equation}
  \label{def of phi star}
\phi^* (C) (\tau) = \tilde \phi ( C(\phi \circ \tau)),
\end{equation}
where we continue to denote by $\phi : \tilde X_1 \rightarrow \tilde X_2$ the unique lift of $\phi : X_1 \rightarrow X_2$ to the universal covers.
This construction is independent of the base points chosen in the beginning. 

Let us now apply this to our situation with $X_1 = \Gamma_0^\beta (\fr n) \backslash \calh$ and  $ X_2 = \Gamma_0 (\fr n) \backslash \calh$. In this case, $\Gamma_1 = \Gamma_0^\beta (\fr n)$ and $\Gamma_2 = \Gamma_0 (\fr n)$ and they act on $M_1 = M_2 = L(\uln, \C)$ since they  are subgroups of $\SL_2 (F)$. We take the map $\phi$ to be the translation map $T_\beta$ which induces, at the level of fundamental groups, the map $(T_\beta)_* : \Gamma_0^\beta (\fr n) \to \Gamma_0 (\fr n)$ which sends $\gamma_\beta^{-1} \gamma \gamma_\beta \mapsto \gamma$. It is an easy check that the map $\tilde T_\beta : M_2 \to M_1$ sending $P \mapsto \gamma_\beta^{-1} P$, for $P \in L(\uln, \C)$, is compatible with $T_\beta$. By the discussion above this induces a map at the level of cohomology
$$
T_\beta^* : H^q (\Gamma_0 (\fr n) \backslash \calh, \call (\uln, \C)) \to H^q (\Gamma_0^\beta (\fr n) \backslash \calh, \call (\uln, \C)).
$$
When $q=1$, what is the image of the element $\delta (f) \in H^1 (\Gamma_0 (\fr n) \backslash \calh, \call (\uln, \C))$? After translating the above map $T_\beta^*$ in terms of vector valued differential forms, we get that
\begin{equation*}
\begin{split}
T_\beta^* (\delta (f)) (\ulz) & \stackrel{\eqref{def of phi star}}{=} \gamma_\beta^{-1} \delta (f)(\gamma_\beta \ulz) \stackrel{\eqref{def of delta(f)}}{=} \gamma_\beta^{-1} \gamma_\beta g \cdot (\Phi \circ F(\gamma_\beta g)) = g \cdot (\Phi \circ F (\gamma_\beta g)) \\
&= [F^0 (\gamma_\beta g), \dots, F^{2n+2} (\gamma_\beta g)] \cdot \psi (g^\iota (X,Y)^t, \bar{g}^\iota (\overline X, \overline Y)^t, {^t} j(g^{-1}, \epsilon)^{-1} (A, B)^t). 
\end{split}
\end{equation*}
Here $\ulz \in \calh$ and we take $g \in \SL_2 (\C)$ such that $g \ulz = \epsilon$, and $A^2, AB, B^2$ are to be replaced by $dz, -dt, -d\bar z$.

Following \cite[\S 5.2]{Gh}, we now want to compute the restriction $T_\beta^*(\delta(f))|_{\H}$ where $\H = \{x + it \mid x, t \in \R\ \mathrm{and}\ t>0 \}$ is the usual upper half-plane which is embedded into the hyperbolic $3$-space $\calh$ as
$$
x + it \mapsto \bmatrix x & -t \\ t & x \endbmatrix.
$$
As in {\it loc.\,cit.}, we make the following two simplifications. Firstly, since we wish to compute this differential form on $\H$, we set $dz = d \bar z$ in our computations. Secondly, we only need to calculate the differential form $\bmatrix 1 & -x \\ 0 & 1 \endbmatrix \cdot \big(T_\beta^*(\delta(f))|_{\H}\big)$, so we set $x=0$ in $\psi$ and only calculate the modified differential form which we denote by $\widetilde{T_\beta^*(\delta(f))|_{\H}}$. 
Note that the components $\psi_\alpha$ of $\psi$, for $\alpha = 0, \dots, 2n+2$, are given by 
$$
\psi_\alpha (X,Y, \overline X, \overline Y, A, B) = (-1)^\alpha \frac{A^2 c_\alpha - 2AB c_{\alpha-1} + B^2 c_{\alpha-2}}{{2n+2 \choose \alpha}},
$$
where 
$$
c_\alpha (X, Y, \overline X, \overline Y) = \sum^n_{\stackrel{j,k = 0}{n = \alpha + j- k}}
(-1)^k {n \choose j}{n \choose k} X^{n-k} Y^k \overline{X}^{n-j} \overline{Y}^{j}.
$$

For $x, t \in \R$ and $t >0$, let $g = \frac{1}{\sqrt t} \bmatrix t & x \\ 0 & 1 \endbmatrix$.  Then $g \cdot \epsilon = (x, t) \in \H \subset \calh$ and $g^\iota = \bar{g}^\iota = \frac{1}{\sqrt t} \bmatrix 1 & -x \\ 0 & t \endbmatrix$. Moreover, $j (g^{-1}, \epsilon)^{-1} = j(g, \epsilon) = \frac{1}{\sqrt t} \bmatrix 1 & 0 \\ 0 & 1 \endbmatrix$. Let $f^\alpha$, for $\alpha = 0, \dots, 2n+2$, be the components of $f : \calh \to L(2n+2, \C)$. The precise relationship between $f^\alpha$ and $F^\alpha$ is given by 
$$
f^\alpha (\ulz) = \frac{1}{\sqrt{t}^{2n+2}} F^\alpha (g).
$$
Note that if $\ulz = (z,t)$, then $T_\beta(\ulz) = (z+\beta, t)$ does not affect the $t$ coordinate. Hence
$$
f^\alpha (T_\beta \ulz) = \frac{1}{\sqrt{t}^{2n+2}} F^\alpha (\gamma_\beta g).
$$
Using this and the pullback formula, we get
\begin{equation*}
\begin{split}
\widetilde{T_\beta^*(\delta(f))|_{\H}}  &= \sum_\alpha^{2n+2} \sqrt{t}^{2n+2} f^\alpha (T_\beta \ulz) \psi_\alpha \left(\frac{1}{\sqrt t} X, {\sqrt t}\ Y, \frac{1}{\sqrt t} \overline X, {\sqrt t}\ \overline Y, \frac{1}{\sqrt t} A, \frac{1}{\sqrt t} B\right) \\
&= \sum_\alpha^{2n+2} f^\alpha (T_\beta \ulz) \psi_\alpha (X, tY, \overline X, t \overline Y, A, B),
\end{split}
\end{equation*}
where we  replace $(A^2, AB, B^2) $ by $(dx, -dt, -dx)$. We have now constructed an element  $\widetilde{T_\beta^*(\delta(f))|_{\H}}  \in H^1_{cusp} (\Gamma_0^\beta (N) \backslash \H, \call (\uln, \C))$ where $\Gamma_0^\beta (N) := \Gamma_0^\beta (\fr n) \cap \Gamma_0 (N) = \Gamma_0^\beta (\fr n) \cap \SL_2(\Z)$, since
in the latter matrix group, the lower left entries are divisible by $N$.  
As in \cite[see below Lemma 2]{Gh}, we have a decomposition of this cohomology group as
\begin{equation}
  \label{Clebsch Gordan}
H^1_{cusp} (\Gamma_0^\beta (N) \backslash \H, \call (\uln, \C)) \stackrel{\sim}{\longrightarrow}  \bigoplus_{m=0}^n H^1_{cusp} (\Gamma_0^\beta (N) \backslash \H, \call (2n-2m, \C)).
\end{equation}
We will call the projection of $\widetilde{T_\beta^*(\delta(f))|_{\H}}$ into the $m$-th component by $\widetilde{T_\beta^*(\delta_{2n-2m}(f))}$, slightly 
abusing notation since the subscript $2n-2m$ should technically be outside the parentheses. For each $m$, define
\begin{equation}
  \label{g alpha}
g^\alpha (\ulz) = \left\{ \begin{array}{ll} 
\frac{f^{\alpha} (\ulz) + (-1)^{n+1-\alpha + m} f^{2n+2 - \alpha} (\ulz)}{{2n+2 \choose \alpha} } & \mathrm{if}\ \alpha = 0, 1, \dots, n, \\
\frac{f^{n+1} (\ulz)}{{2n+2 \choose n+1} } & \mathrm{if}\ \alpha = n+1.
\end{array} \right.
\end{equation}
Then, we have
\begin{equation}
  \label{def of T beta delta 2n-2m}
\widetilde{T_\beta^*(\delta_{2n-2m}(f))}  (x,t) = \sum_{l=0}^{2n-2m} (A_l dx + 2 B_l dt) t^{2n-m-l} X^l Y^{2n-2m-l},
\end{equation}
where
\begin{gather*}
A_l = \sum_{\alpha = 0}^{n+1} (-1)^\alpha g^\alpha (T_\beta (x,t)) a(m,l, \alpha), \\
B_l = \sum_{\alpha = 0}^{n+1} (-1)^\alpha g^\alpha (T_\beta (x,t)) b(m,l, \alpha),
\end{gather*}
with $a(m,l, \alpha)$ and $b(m,l,\alpha)$ the integers defined at the end of \cite[\S 5]{Gh}.

For any $n \geq 0$ and any $\Z[1/n!]$-algebra $A$, there is an $\SL_2 (\Z)$-equivariant pairing \cite[Lemma 4]{Gh}
$$
\langle \ , \ \rangle: L(n, A) \otimes L(n, A) \to A,
$$
which induces by Poincare duality a pairing 
$$
\langle \ , \ \rangle: H^1_{c} (\Gamma_0^\beta (N) \backslash \H, \call (n, A)) \otimes H^1 (\Gamma_0^\beta (N) \backslash \H, \call (n, A)) \to H^2_{c} (\Gamma_0^\beta (N) \backslash \H, A) \to A,
$$
where the last map $H^2_{c} (\Gamma_0^\beta (N) \backslash \H, A) \to A$ is given by integrating a compactly supported $2$-form on a 
fundamental domain $[\Gamma_0^\beta (N) \backslash \H]$ of $\Gamma_0^\beta (N) \backslash \H$. We will use this pairing when $A = \C$, $A = E$ is a $p$-adic number field
with $p > n$, and with $A = \calo_E$, its ring of integers.  When $A = \C$, the pairing can be extended to  
$$\langle \ , \ \rangle: H^1_{cusp} (\Gamma_0^\beta (N) \backslash \H, \call (n, \C)) \otimes H^1 (\Gamma_0^\beta (N) \backslash \H, \call (n, \C)) \to H^2_{cusp} (\Gamma_0^\beta (N) \backslash \H, \C) \to \C \cup \{\infty\}.
$$

For each $m$, there is an Eisenstein differential form
$E^\beta_{2n-2m+2}$ for $\Gamma_0^\beta(N)$ given by 
\begin{equation}
  \label{Eis beta}
  E^\beta_{2n-2m+2}(s,z) = \sum_{\gamma \, \in \, {\Gamma^\beta_0(N)}_\infty \backslash \Gamma_0^\beta(N)}
            \>  \gamma^{-1} \cdot \gamma^* (\omega \, y^s),
\end{equation}
where $\omega = (X-zY)^{2n-2m}dz$. One may check that
\begin{eqnarray}
  \label{Eis beta 2}
  E^\beta_{2n-2m+2}(s,z) = \sum_{\gamma \, = \, \left(\begin{smallmatrix} a & b \\ c & d \end{smallmatrix}\right) \, \in \, {\Gamma^\beta_0(N)}_\infty  \backslash
                                                            \Gamma^\beta_0(N)}
             \> \frac{1}{(cz+d)^{2n-2m+2} |cz+d|^{2s}}
             \cdot y^s \, \omega.
\end{eqnarray}
We view $E^\beta_{2n-2m+2}$ as an element of $H^1 (\Gamma_0^\beta (N) \backslash \H, \call (n, \C))$. We now wish to evaluate
$$
\langle T_\beta^*(\delta_{2n-2m}(f)), E^\beta_{2n-2m+2} \rangle,
$$
following \cite[\S 6.3]{Gh}.
We have
\begin{equation*}
\begin{split}
\langle T_\beta^*(\delta_{2n-2m}(f)), E^\beta_{2n-2m+2} \rangle &= \int_{[\Gamma_0^\beta (N) \backslash \H]}  \langle T_\beta^*(\delta_{2n-2m}(f)) (x,t), E^\beta_{2n-2m+2} (x,t) \rangle \\
&= \int_{[\Gamma_0^\beta (N) \backslash \H]}  \langle \widetilde{T_\beta^*(\delta_{2n-2m}(f))}, \widetilde{E^\beta_{2n-2m+2}} \rangle,
\end{split}
\end{equation*}
where 
the $\widetilde{\quad}$ indicates that we have twisted the differential forms by the action of the matrix $\bmatrix 1 & -x \\ 0 & 1 \endbmatrix$.  Using a standard unwinding argument, the last integral becomes
$$
\int_0^\infty \int_0^1 \langle \widetilde{T_\beta^*(\delta_{2n-2m}(f))}, \widetilde{\omega} t^s \rangle,
$$
where $\widetilde \omega = (X-itY)^{2n-2m} dz$. Using the expression \eqref{def of T beta delta 2n-2m} for $\widetilde{T_\beta^*(\delta_{2n-2m}(f))}  (x,t)$ and the definition of the pairing, we have
\begin{equation*}
\begin{split}
\int_0^\infty \int_0^1 \langle \widetilde{T_\beta^*(\delta_{2n-2m}(f))}, \widetilde{\omega} t^s \rangle =  & \int_0^\infty \int_0^1 \sum_{l=0}^{2n-2m} i^{l+1} A_l t^{2n-m+s} \ dxdt \\ 
& \quad - 2 \int_0^\infty \int_0^1 \sum_{l=0}^{2n-2m} i^{l} B_l t^{2n-m+s} \ dxdt.
\end{split}
\end{equation*}
We denote the first integral by $I_1$ and the second integral by $I_2$. We now compute $I_1$ using the definition of $A_l$ as
$$
I_1 = \sum_{l=0}^{2n-2m} i^{l+1} \sum_{\alpha = 0}^{n+1} (-1)^\alpha a(m,l,\alpha) \int_0^\infty \int_0^1 g^\alpha (T_\beta (x,t)) t^{2n-m+s}\ dx dt.
$$
Using the Fourier expansion for the $\alpha$-th component of $f$, see \cite[(7)]{Gh} with $a_1 = 1$,  we get
$$
f^\alpha (T_\beta (x,t)) = t { 2n+2 \choose \alpha} \left[ \sum_{\xi \in F^\times} c(\xi d) \left(\frac{\xi}{i |\xi|}\right)^{n+1-\alpha} K_{\alpha - n - 1} (4\pi t |\xi|) e_F (\xi (x+\beta)) \right],
$$
where $e_F (w) = e^{2 \pi i \Tr_{F/\Q} (w)} $. Using \eqref{g alpha} and plugging this into the expression for $I_1$, we get
\begin{equation*}
\begin{split}
I_1 = & \sum_{l=0}^{2n-2m}  i^{l+1}  \sum_{\alpha = 0}^n (-1)^\alpha a(m,l,\alpha) \int_0^\infty \sum_{\xi \in F^\times} c(\xi d) t^{2n-m+1+s} \\
& \left( \left(\frac{\xi}{i |\xi|}\right)^{n+1-\alpha} K_{\alpha - n - 1} (4\pi t |\xi|) 
+ (-1)^{n+m+1-\alpha} \left(\frac{\xi}{i |\xi|}\right)^{\alpha - n - 1} K_{ n + 1 - \alpha} (4\pi t |\xi|) \right) dt  \\
& \int_0^1 e_F (\xi (x+\beta)) dx + \sum_{l=0}^{2n-2m} i^{l+1} (-1)^{n+1} a(m,l, n+1) \int_0^\infty \sum_{\xi \in F^\times} c(\xi d) t^{2n+1-m+s} \\
& K_0 (4\pi t |\xi|) dt \int_0^1 e_F (\xi (x+\beta)) dx.
\end{split}
\end{equation*}
The only terms $c(\xi d)$ that survive are when $\xi = \frac{r}{\sqrt{-D}}$, for some $0 \neq r \in \Z$, and in this case $\int_0^1 e_F (\xi x) dx = 1$. 
\begin{equation*}
\begin{split}
I_1 = & \sum_{l=0}^{2n-2m}  i^{l+1} \sum_{\alpha = 0}^n  (-1)^\alpha a(m,l,\alpha) \sum_{r \neq 0} e_F (r\beta/\sqrt{-D}) c(r) \left(\frac{-r}{|r|}\right)^{n+1-\alpha}
\\ 
& \int_0^\infty t^{2n+1-m+s} \left[K_{\alpha - n - 1} \left(\frac{4\pi t |r|}{\sqrt{D}}\right) + (-1)^{n+m+1-\alpha} K_{ n + 1 -\alpha} \left(\frac{4\pi t |r|}{\sqrt{D}}\right) \right] dt \\
& + \sum_{l=0}^{2n-2m} i^{l+1} (-1)^{n+1} a(m,l,n+1) \sum_{r \neq 0} e_F (r\beta/\sqrt{-D}) c(r) \int_0^\infty t^{2n+1 - m +s} K_0 \left(\frac{4\pi t |r|}{\sqrt{D}}\right) dt.
\end{split}
\end{equation*}
The Bessel functions have the property \cite[Lemma 7]{Gh}
$$
\int_0^\infty K_\nu (at) t^{\mu -1} dt = 2^{\mu-2} a^{-\mu} \Gamma\left(\frac{\mu+\nu}{2}\right)  \Gamma\left(\frac{\mu-\nu}{2}\right). 
$$
This implies that the two Bessel functions in the sum above will cancel each other unless $ \alpha \equiv  n+1+m \mod (2)$. Setting $s^\prime = 2n+2 -m +s$, we have
\begin{equation*}
\begin{split}
I_1 = & \> \frac{(-1)^{n+1} \sqrt{D}^{s^\prime}}{2 (2\pi)^{2n+2 -m+s}} \sum_{l=0}^{2n-2m} i^{l+1} \sum_{\stackrel{\alpha=0}{\alpha \equiv n+1+m\ (2)}}^n (-1)^m a(m,l,\alpha)  \sum_{0 \neq r \in \Z} e_F (r\beta/\sqrt{-D}) c(r) \\
& \left(\frac{-r}{|r|}\right)^{n+1-\alpha} \frac{1}{|r|^{s'}} \Gamma\left(\frac{n+1-m+\alpha+s}{2}\right) \Gamma\left(\frac{3n+3-m-\alpha+s}{2}\right) \\
& + \frac{(-1)^{n+1} \sqrt{D}^{s^\prime}}{4 (2\pi)^{2n+2 -m+s}} \sum_{l=0}^{2n-2m} i^{l+1} a(m,l,n+1) \sum_{0 \neq r \in \Z} e_F (r\beta/\sqrt{-D}) c(r) \frac{1}{|r|^{s'}} \\
& \Gamma\left(\frac{2n+2 -m +s}{2}\right)^2.
\end{split}
\end{equation*}
We will take $\beta = \frac{b\sqrt{-D}}{2}$ for some rational number $b$. Then the term $e_F (r\beta/\sqrt{-D}) = e^{2\pi i rb}$.

Now we break the sum over $r$ into a sum over positive integers and a sum over negative integers.  The term $\left(\frac{-r}{|r|}\right)^{n+1-\alpha}$ equals $(-1)^m$ when $r$ is positive and is $1$ when $r$ is negative. The second sum over $r$ does not have such a term, so we assume that $m$ is even in order to be able to put these terms together into a single term. The terms $c(r)$ and $|r|^{s'}$ are obviously independent of the sign of $r$.  
So finally, we have
\begin{equation}
\begin{split}
I_1 = & \> \frac{(-1)^{n+1} \sqrt{D}^{s^\prime}}{2 (2\pi)^{2n+2 -m+s}} \sum_{r = 1}^\infty \left(e^{2\pi i r b} + e^{-2 \pi i r b}\right) \frac{c(r)}{r^{s^\prime}} \sum_{l = 0}^{2n-2m} i^{l+1} \\
& \sum_{\stackrel{\alpha=0}{\alpha \equiv n+1+m\ (2)}}^{n+1} a(m,l,\alpha) \Gamma\left(\frac{n+1-m+\alpha+s}{2}\right) \Gamma\left(\frac{3n+3-m-\alpha+s}{2}\right),
\end{split}
\end{equation}
where there is an extra factor of $\frac{1}{2}$ in the $\alpha = n+1$ term, which we will adjust for. 
By a similar computation, $I_2$ will also have an expression in terms of $b(m,l,\alpha)$. Putting together these two expressions, we get that
$$
\langle T_\beta^*(\delta_{2n-2m}(f)), E^\beta_{2n-2m+2} (s) \rangle = \frac{\sqrt{D}^{s^\prime}}{(2\pi)^{2n+2-m+s}}  \sum_{r = 1}^\infty \left(e^{2\pi i r b} + e^{-2 \pi i r b}\right) \frac{c(r)}{r^{s^\prime}} \cdot G'_{\infty} (s,f),
$$
where we collect all the combinations of Gamma factors appearing in both $I_1$ and $I_2$ and denote it by $G'_{\infty} (s,f)$. 

Now let $\chi : (\Z/p^j \Z)^\times \to \C^\times$ be a primitive character (so $j = j_\chi$) and recall that 
$$
G(s,\bar \chi, f) = L_N (2s-2k+2, \bar \chi^2)  \sum_{r=1}^\infty \frac{ c(r) \bar \chi(r)}{r^s}.
$$
Substituting the formula
$$
\bar \chi (r) = \frac{1}{G(\chi)} \sum_{a \mod p^j}  \chi (a) e^{2\pi i r a/ p^j}
$$
in the above equation, we get
$$
G(s^\prime, \bar \chi, f) = L_N (2n-2m+2+2s, \bar \chi^2) \frac{1}{G( \chi)} \sum_{a \mod p^j}  \chi (a)  \sum_{r=1}^\infty \frac{c(r)}{r^{s^\prime}} e^{2\pi i r a/ p^j}.
$$

Now assume that $\chi$ is an even character, i.e., $ \chi(-1) = 1$. Then grouping together the terms coming from $a$ and $-a$, we get
$$
G(s^\prime, \bar \chi, f) = L_N (2n-2m+2+2s, \bar \chi^2) \frac{1}{G(\chi)} \sum_{a \in R}  \chi (a)  \sum_{r=1}^\infty \frac{c(r)}{r^{s^\prime}} (e^{2\pi i r a/p^j} +  e^{-2\pi i r a/p^j}),
$$
where $R$ is half of the representatives modulo $p^j$ such that if $a \in R$, then $-a \not \in R$.  We now write $G(s^\prime, \chi, f)$ in terms of the inner product considered earlier
\begin{multline}
G(\chi) G(s^{\prime}, \bar \chi, f) = \frac{(2\pi)^{2n+2-m+s}}{G'_{\infty} (s,f) \sqrt{D}^{s^\prime}}  L_N (2n-2m+2+2s, \bar \chi^2) \\ \sum_{a \in R}  \chi (a) \langle T_{\beta}^*(\delta_{2n-2m}(f)), E^\beta_{2n-2m+2} (s) \rangle,
\end{multline}
with $\beta = a\sqrt{-D}/2p^j$. 

Let $G_{\infty} (s,f) = G'_\infty (s,f) \Gamma(s+ 2n-2m+2)$. 
Dividing both sides of the above equation by the period $G(\bar \chi^2)  (2\pi)^{2n-2m+2}$, we obtain
\begin{multline*}
\frac{G(\chi) G(s^{\prime}, \bar \chi, f)}{G(\bar \chi^2)(2\pi)^{2n-2m+2}} = \frac{(2\pi)^{2n+2-m+s}}{G_{\infty} (s,f) \sqrt{D}^{s^\prime}} \cdot \frac{L_N (2n-2m+2+2s, \bar \chi^2)}{(2\pi)^{2n-2m+2} G(\bar \chi^2)} \cdot \Gamma(s+2n-2m+2) \\
\cdot \sum_{a \in R}  \chi (a) \langle T_{\beta}^*(\delta_{2n-2m}(f)), E^\beta_{2n-2m+2} (s) \rangle ,
\end{multline*}
We evaluate this expression at $s=0$. Note that $G_\infty(0,f) \neq 0$, by \cite[\S 6.4]{Gh} using some special arguments, and by Lanphier-Skogman and Ochiai
\cite{sl} as a consequence of their proof of \cite[Conjecture 1]{Gh}. The special value $L_N (2n-2m+2, \bar \chi^2)$ becomes rational after 
dividing by the period 
$G(\bar \chi^2)  (2\pi)^{2n-2m+2}$. We denote this ratio by  $L^\circ (2n-2m+2, \bar \chi^2)$. 
We get
\begin{multline}\label{integral-expression}
\frac{G(\chi) G(2n-m+2, \bar \chi, f)}{G(\bar \chi^2) \Omega_\infty} =  L^\circ (2n-2m+2, \bar \chi^2) \sum_{a \in R} \chi (a) \langle T_{\beta}^*(\delta_{2n-2m}(f)), E^\beta_{2n-2m+2} (0) \rangle.
\end{multline}
Here $\Omega_\infty$ is defined as 
$$
\Omega_\infty = \frac{(2\pi)^{4n-3m+4} \Gamma(2n-2m+2)}{G_\infty (0,f) \sqrt{D}^{2n-m+2}}.
$$

We now conclude rationality properties of the special values $G(2n-2m+2, \bar \chi, f)$ from equation \eqref{integral-expression}. 
Choose a period $\Omega(f)$ such that after dividing by this period, the differential form 
$$
\delta^\circ (f):= \frac{\delta(f)}{\Omega(f)} \in H^1_{cusp} (\Gamma_0 (\fr n) \backslash \calh, \call (\uln, E))
$$
 takes rational values. Here $E$ is a sufficiently large $p$-adic field, containing the field of rationality of the form $f$, which we also view as a subfield of $\C$ after fixing an isomorphism between $\C$ and $\qpbar$.  Then $$T_\beta^* \delta(f) |_{\H} = \Omega(f)  \cdot T_\beta^* \delta^\circ(f) |_{\H},$$ noting that if $\sqrt{-D} \in E$, which we assume, 
then the image 
$T_\beta^* \delta^\circ(f) |_{\H}$ of $\delta^\circ(f)$ under the map
$$
T_\beta^*|_{\H}: H^1_{cusp} (\Gamma_0 (\fr n)\backslash \calh, \call (\uln, E)) \to H^1_{cusp} (\Gamma^\beta_0 (\fr n)\backslash \calh, \call (\uln, E)) 
                              \to H^1_{cusp} (\Gamma^\beta_0 (N) \backslash \H, \call (\uln, E)), 
$$
is also rational. Since Clebsch-Gordan preserves rationality, for $0 \leq m \leq n$, we obtain that 
$$T_\beta^*(\delta_{2n-2m}(f)) = \Omega(f)  \cdot T_\beta^*(\delta_{2n-2m}^\circ(f)),$$
where $T_\beta^*(\delta_{2n-2m}^\circ(f)) \in H^1_{cusp}(\Gamma^\beta_0(N)\backslash \H, \call (2n-2m, E))$ is also rational.

The rational cuspidal class $T_\beta^*(\delta_{2n-2m}^\circ(f))$ is cohomologous to a compactly supported rational class which has the same value
when paired with $E^\beta_{2n-2m+2} (0)$ (see the proof of \cite[Theorem 1]{Gh}).
Since the differential form $E^\beta_{2n-2m+2} (0)$ coming from the Eisenstein series is $E$-rational, at least when $m \neq n$ (see 
Proposition~\ref{prop rational} in Section~\ref{section Eisenstein} below), and the pairing between
compactly supported rational classes and such classes preserves $E$-rationality, the following theorem follows 
from \eqref{integral-expression},  if $E$ contains the field of rationality of $\chi$, which we again assume. 

\begin{theorem}[Rationality result for twisted Asai $L$-values] Let $E$ be a sufficiently large $p$-adic number field with $p \nmid 2ND$. 
Let $0 \leq m < n$ be even and $\chi$ be even. Then 
\begin{equation*}
\frac{G(\chi) G(2n-m+2, \bar \chi, f)}{ G(\bar \chi^2) \Omega(f) \Omega_\infty}  \in E.
\end{equation*}
\end{theorem}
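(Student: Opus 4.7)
The plan is to deduce the theorem directly from the integral expression \eqref{integral-expression} by proving that every factor on its right-hand side lies in $E$ once $1/\Omega(f)$ is absorbed into the cuspidal cohomology class. Concretely, I would rewrite \eqref{integral-expression} as
$$
\frac{G(\chi) G(2n-m+2,\bar\chi,f)}{G(\bar\chi^2)\Omega(f)\Omega_\infty} \;=\; L^\circ(2n-2m+2,\bar\chi^2) \sum_{a \in R} \chi(a)\, \left\langle T_\beta^*(\delta_{2n-2m}^\circ(f)),\, E^\beta_{2n-2m+2}(0) \right\rangle,
$$
and verify three claims: (i) $L^\circ(2n-2m+2,\bar\chi^2) \in E$, (ii) $\chi(a) \in E$, and (iii) each inner product lies in $E$.

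Claims (i) and (ii) are immediate. For (i), since $\bar\chi^2$ is an even Dirichlet character and $2n-2m+2 \ge 4$ is a positive even integer (here the hypothesis $m<n$ is used), the classical Euler--Leopoldt formula puts the normalized critical value in the field of values of $\chi$, which is contained in $E$ by hypothesis. Claim (ii) is just the hypothesis that $E$ contains the field of rationality of $\chi$.

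The real content is (iii). By construction of $\Omega(f)$, the class $\delta^\circ(f) := \delta(f)/\Omega(f)$ is $E$-rational, and since $\beta = a\sqrt{-D}/2p^j$ with $\sqrt{-D} \in E$, the pullback $T_\beta^*(\delta^\circ(f))|_{\H}$ remains $E$-rational. The Clebsch--Gordan decomposition \eqref{Clebsch Gordan}, being defined over $\Q$, projects this onto an $E$-rational cuspidal class $T_\beta^*(\delta_{2n-2m}^\circ(f))$ on $\Gamma_0^\beta(N)\backslash\H$. Following the argument in \cite[Theorem 1]{Gh}, I would next represent this cuspidal cohomology class by a compactly supported $E$-rational cocycle that pairs identically with the Eisenstein class; the difference is a coboundary whose pairing with $E^\beta_{2n-2m+2}(0)$ vanishes by a Stokes-type boundary computation at the cusps, using that Eisenstein series are smooth up to the boundary while the corrective cochain is supported there. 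By Proposition~\ref{prop rational} of Section~\ref{section Eisenstein}, the Eisenstein class $E^\beta_{2n-2m+2}(0)$ itself is $E$-rational --- and this is precisely where the hypothesis $m \neq n$ enters --- so the $E$-rational Poincar\'e duality pairing outputs an element of $E$, proving (iii).

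The main obstacle is the input from Section~\ref{section Eisenstein}: establishing $E$-rationality of the specialization $E^\beta_{2n-2m+2}(0)$ requires controlling the analytic continuation of \eqref{Eis beta 2} to $s=0$ and exhibiting a rational holomorphic representative of the resulting cohomology class. The exclusion of $m=n$ reflects the weight-two case, where convergence and holomorphic projection issues would obstruct rationality. Once this rationality statement is in hand, the remaining ingredients --- rationality of Dirichlet critical values, functoriality of cohomology under $T_\beta$, $\Q$-rationality of Clebsch--Gordan, and the standard cuspidal-to-compactly-supported comparison --- combine formally to yield the theorem.
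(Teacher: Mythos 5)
Your proposal follows essentially the same route as the paper: starting from \eqref{integral-expression}, you absorb $\Omega(f)$ into the $E$-rational class $\delta^\circ(f)$, track rationality through the pullback $T_\beta^*$, the Clebsch--Gordan projection, and the cuspidal-to-compactly-supported comparison from the proof of \cite[Theorem 1]{Gh}, and then invoke Proposition~\ref{prop rational} for the $E$-rationality of $E^\beta_{2n-2m+2}(0)$ together with the rationality of $L^\circ(2n-2m+2,\bar\chi^2)$ and $\chi(a)$. This matches the paper's argument in both structure and all key inputs, so the proposal is correct.
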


\noindent This result matches with \cite[Theorem 1]{Gh} when $\chi$ is trivial.  In that theorem it was assumed that the finite 
part of the central character of $f$ is non-trivial primarily to deal with the rationality of the Eisenstein series when $m = n$. 
In this paper, we have assumed (for simplicity) that the finite part of the central 
character of $f$ is trivial. We could still probably include the case $m = n$ in the theorem above, by using the rationality of 
the Eisenstein series $E^\beta_{2}(0,z) - pE^\beta_2(0,pz)$ instead (see \cite[Remark 2]{Gh}).

\section{Rationality of Eisenstein cohomology classes}
  \label{section Eisenstein}

We start by recalling the following result that goes back to Harder \cite{Ha4}, \cite{Ha}. See also \cite[\S 10]{Hi}.

\begin{lemma} 
  \label{lemma rational}
  Eisenstein cohomology classes corresponding to Eisenstein series whose constant terms at every cusp are rational are rational cohomology 
  classes. 
\end{lemma}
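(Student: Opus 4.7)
The plan is to deduce the lemma from Harder's theory of Eisenstein cohomology, working with the Borel--Serre compactification. Let $X = \Gamma_0^\beta(N)\backslash \H$ and let $\overline X$ be its Borel--Serre compactification, so that $X \hookrightarrow \overline X$ is a homotopy equivalence and hence $H^*(\overline X, \call) \cong H^*(X,\call)$. The boundary $\partial \overline X$ is a disjoint union of pieces indexed by the $\Gamma_0^\beta(N)$-equivalence classes of cusps; for $\SL_2$ each boundary piece is a circle (an $S^1$-bundle over a point), and its first cohomology with local coefficients in $\call(n,A)$ can be computed explicitly in terms of the action of a cusp stabilizer on $L(n,A)$. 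Restriction to the boundary gives a map $r : H^1(X,\call) \to H^1(\partial \overline X, \call)$, compatible with any $A$-rational structure coming from $\SL_2(\Z)$-stable lattices in $L(n,A)$.

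Next, I would invoke the key structural result (due to Harder; see also \cite[\S 10]{Hi}): there is a Hecke-equivariant splitting
$$H^1(X,\call(n,\C)) = H^1_{cusp}(X,\call(n,\C)) \oplus H^1_{Eis}(X,\call(n,\C)),$$
and the restriction $r$ is identically zero on the cuspidal summand while it is \emph{injective} on the Eisenstein summand. This second fact is the heart of the matter: it means that an Eisenstein cohomology class is determined by its image on the boundary, so rationality on the boundary is equivalent to rationality of the class itself. The Eisenstein summand can be identified, via Harder's analytic construction, with a sub-representation of the boundary cohomology that is lifted back into $H^1(X,\call)$ using the Eisenstein series whose constant term realizes the prescribed boundary datum.

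To finish, I would compute $r([E])$ for an Eisenstein cohomology class $[E]$ represented by an Eisenstein series (of the shape \eqref{Eis beta 2}). A standard unfolding of the Eisenstein series at a cusp shows that $r([E])$, viewed as a cocycle on the stabilizer of that cusp with values in $L(n,\C)$, is given by the constant Fourier coefficient of $E$ at that cusp times an explicit differential factor. If, for every cusp, the constant term is rational (i.e.\ lies in $L(n,E)$), then $r([E])$ defines a class in the rational boundary cohomology $H^1(\partial \overline X, \call(n,E))$. By injectivity of $r$ on the Eisenstein summand, and by the compatibility of $r$ with the rational structures on both sides, this forces $[E]$ itself to lie in $H^1_{Eis}(X,\call(n,E))$, proving the lemma.

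The main obstacle is the injectivity statement for $r$ on the Eisenstein summand, which is not formal: it rests on the spectral decomposition of automorphic forms and the non-vanishing of the leading term of the constant term of the Eisenstein series at $s = 0$ (where our classes are evaluated). This is where one cites Harder's work \cite{Ha4,Ha} directly. A secondary technical point is to verify that the analytic continuation of $E^\beta_{2n-2m+2}(s,z)$ to $s=0$ lands in the Eisenstein subspace with the expected boundary behavior, so that the identification between the cohomological constant term and the classical constant term of the underlying Eisenstein series is genuinely valid.
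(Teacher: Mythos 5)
Your overall architecture is the same as the paper's (restrict to boundary cohomology, split off the cuspidal part, recover an Eisenstein class from its constant terms), but the decisive step is a non sequitur. You write that injectivity of $r$ on the Eisenstein summand, together with compatibility of $r$ with the rational structures, "forces $[E]$ itself to lie in $H^1_{Eis}(X,\call(n,E))$". It does not. What injectivity plus rationality of $r([E])$ gives you, via a dimension count, is only that $r([E])$ has \emph{some} rational preimage $\omega_0 \in H^1(X,\call(n,E))$; then $[E]-\omega_0$ lies in $\ker r$, i.e.\ is cuspidal, and to conclude that $[E]$ (the Eisenstein component of $\omega_0$) is rational you need the splitting $H^1 = H^1_{cusp}\oplus H^1_{Eis}$ itself to be defined over $E$. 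Nothing in your argument rules out $[E]=\omega_0+\lambda c$ with $c$ a rational cuspidal class and $\lambda$ irrational: such a class has the same (rational) boundary restriction and still lies in \emph{a} complement of the cuspidal part. The rationality of the Eisenstein complement is precisely the deep arithmetic content here, and it is what the paper extracts from Harder: the existence of a section $M$ of the restriction map that \emph{preserves rational structures}, whence $\omega = M(R(\omega))$ is rational whenever $R(\omega)$ is. You cite Harder, but you attribute to him only the injectivity of $r$ on $H^1_{Eis}$ (which is the comparatively soft, analytic part) rather than the rationality of the section, which is the input that actually makes the lemma true.

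The fix is minor in wording but essential in substance: replace "injectivity $+$ compatibility with rational structures" by Harder's rational section $M$ (as the paper does), or alternatively argue that the Hecke-equivariant splitting you invoke is defined over $E$ because the Hecke operators preserve the $E$-structure and the cuspidal and Eisenstein Hecke eigensystems are disjoint. The remaining ingredients of your write-up --- the Borel--Serre boundary description, the vanishing of $r$ on the cuspidal summand, the unfolding identification of $r([E])$ with the constant terms, and the caveat about evaluating the analytic continuation at $s=0$ --- are all consistent with the paper and unproblematic.
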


\begin{proof}
We use notation in this proof that is independent of the rest of the paper.   
Let $\Gamma \subset  \SL_{2}(\mathbb{Z})$  be a congruence subgroup and $\call(n, \mathbb{C})$ denote the sheaf of locally constant 
sections of $\pi: \Gamma \backslash (\mathbb{H} \times L(n,\mathbb{C})) \rightarrow \Gamma  \backslash \mathbb{H}$. Consider
the restriction map to boundary cohomology given by 
   \begin{align*}
  R : H^{1}(\Gamma \backslash \mathbb{H}, \call(n, \mathbb{C})) \rightarrow H^{1}_\partial (\Gamma \backslash \mathbb{H}, \call(n, \mathbb{C}))  := \bigoplus_{\xi}  H^{1} (\Gamma_{\xi} \backslash \mathbb{H}, \call(n, \mathbb{C})),
   \end{align*}
where $\xi$ varies through the cusps of $\Gamma$. 
We know that $$H^{1}(\Gamma \backslash \mathbb{H}, \call(n, \mathbb{C})) = H^{1}_{\text{cusp}}(\Gamma \backslash \mathbb{H}, \call(n, \mathbb{C})) \oplus H^{1}_{\text{Eis}}(\Gamma \backslash \mathbb{H}, \call(n, \mathbb{C})),$$ 
where $H^{1}_{\text{cusp}}$ and $H^{1}_{\text{Eis}}$ are the cuspidal and Eisenstein part of cohomology respectively. The restriction of  $R$  to $H^{1}_{\text{Eis}}$ is an isomorphism 
  \begin{align*}
  R: H^{1}_{\text{Eis}}(\Gamma \backslash \mathbb{H},\call(n, \mathbb{C})) & \rightarrow  \bigoplus_{\xi}  H^{1} (\Gamma_{\xi} \backslash \mathbb{H}, \call(n, \mathbb{C})) \\
  \omega  & \mapsto c_{\xi}(0,\omega),
  \end{align*} 
  where $c_{\xi}(0,\omega)$ is the differential form corresponding to the 
  ``constant term" in the Fourier expansion at the cusp $\xi$ of the differential form $\omega$ corresponding to the underlying Eisenstein series.
  Clearly $R$ preserves the rational structures on both sides. 
The following fact is due to Harder. 

\vspace{.2cm}

{\noindent \textbf{Fact}:} There exists a section 
  $$M: \oplus_{\xi}  H^{1} (\Gamma_{\xi} \backslash \mathbb{H}, \call(n, \mathbb{C}))  \rightarrow H^{1}(\Gamma \backslash \mathbb{H}, \call(n, \mathbb{C}))$$ 
of $R$ preserving rational structures on both sides. 

\vspace{.2cm}

\noindent Now let $\omega \in H^{1}_{\text{Eis}}(\Gamma \backslash \mathbb{H},\call(n, \mathbb{C}))$ be  such that $R(\omega)$ is rational. 
Then $M ( R (\omega) )$ is rational and $R(M(R(\omega))) = R(\omega)$. Since $R$ is an isomorphism we have $M(R(\omega)) = \omega$. 
Hence, $\omega$ is rational. Thus the Eisenstein class $\omega$ is rational if and only if  the constant term in the 
Fourier expansion at every cusp is rational. This proves the lemma. 
\end{proof}

\begin{proposition}
  \label{prop rational}
  If $m \neq n$, then the Eisenstein differential form $$E^\beta_{2n-2m+2}(0) \in H^1(\Gamma_0^\beta(N) \backslash \H, \call (2n-2m, E))$$ is rational,
  for a sufficiently large $p$-adic number field $E$. 
\end{proposition}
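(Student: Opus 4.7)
The plan is to invoke Lemma~\ref{lemma rational} and reduce to verifying $E$-rationality of the constant term of the Eisenstein differential form $E^\beta_{2n-2m+2}(0,z)$ at every cusp of $\Gamma_0^\beta(N)$. The hypothesis $m \neq n$, i.e.\ $m < n$, forces the weight $k := 2n-2m+2 \geq 4$, so the series defining $E^\beta_{2n-2m+2}(s,z)$ converges absolutely at $s=0$ and yields a classical holomorphic vector-valued Eisenstein series of weight $k$ with coefficients in $L(2n-2m,\C)$; no analytic continuation or residue considerations are needed at $s=0$.

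To carry out the verification, I would enumerate the cusps of $\Gamma_0^\beta(N) = \gamma_\beta^{-1}\Gamma_0(\fr n)\gamma_\beta \cap \SL_2(\Z)$ by matrices $\sigma_\xi \in \SL_2(\Q)$ and compute the constant Fourier coefficient of $\sigma_\xi^{*}E^\beta_{2n-2m+2}(0)$ at infinity via the standard unwinding argument. The output decomposes as (i) a trivial contribution from the $(c,d)=(0,\pm 1)$ cosets, which is a polynomial in $X,Y$ with $\Q$-coefficients, plus (ii) a non-trivial contribution assembled from partial Dirichlet $L$-values $L_N(k,\chi)$ multiplied by the Fourier kernel factor $(2\pi i)^k/(k-1)!$ arising from the Hurwitz--Lipschitz expansion of $\sum_d (cz+d)^{-k}$. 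By Euler's classical formula (or the functional equation for Dirichlet $L$-series) the quantity $L_N(k,\chi)/(2\pi i)^k$ is rational for even $k \geq 2$ and $\chi$ of matching parity, so each such non-trivial contribution lies in $E$ once $E$ is taken large enough to contain the character values, as is already assumed throughout the paper.

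The main obstacle I expect is the bookkeeping of the vector-valued coefficient $(X-zY)^{2n-2m}$ under the action of the $\sigma_\xi$ and its interplay with the unwinding: one must expand $\sigma_\xi \cdot (X-zY)^{2n-2m}$ in the standard basis of $L(2n-2m,E)$, track how the $(2\pi i)^k$ kernel factor pairs against $L_N(k,\chi)$, and confirm that the resulting coefficients land in $E$ in the chosen normalization. These computations are standard but notationally heavy; a cleaner alternative is to invoke Harder's rationality theorem for Eisenstein cohomology classes \cite{Ha4, Ha} (see also \cite[\S 10]{Hi}), which is precisely the ``Fact'' used in the proof of Lemma~\ref{lemma rational} and already packages these cancellations. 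Either way, the upshot is that the constant term of $E^\beta_{2n-2m+2}(0)$ at every cusp lies in $E$, so by Lemma~\ref{lemma rational} the class $E^\beta_{2n-2m+2}(0) \in H^1(\Gamma_0^\beta(N)\backslash \H, \call(2n-2m, E))$ is rational.
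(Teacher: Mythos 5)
Your overall strategy coincides with the paper's: reduce to Lemma~\ref{lemma rational}, note that $m<n$ forces the weight $k=2n-2m+2\geq 4$ so everything converges absolutely at $s=0$, and verify $E$-rationality of the Fourier expansion using special values of Dirichlet $L$-functions. However, the mechanism you give for the rationality of the constant term does not work as stated. The factor $(-2\pi i)^{k}/(k-1)!$ from the Lipschitz expansion multiplies only the \emph{positive} Fourier coefficients of $\sum_d (cz+d)^{-k}$; the constant term of the lattice sum $G_k^{(u,v)}$ is the bare partial zeta value $\zeta^{v}(k)$, which by \eqref{zeta} is a combination of $L(k,\psi)$'s and hence a rational multiple of $(2\pi i)^{k}$ --- not rational. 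So "partial $L$-value times $(2\pi i)^k/(k-1)!$" would be a rational multiple of $\pi^{2k}$, and the conclusion "each such contribution lies in $E$" does not follow. What actually saves the constant term is the coprimality normalization: $E^\beta_k(0,z)$ is a sum over $\Gamma_0^\beta(N)_\infty\backslash\Gamma_0^\beta(N)$, i.e.\ over coprime pairs $(c,d)$, and removing the coprimality by M\"obius inversion introduces the factors $\zeta_+^{l}(k,\mu)$ of \eqref{zeta plus original}, which are combinations of $L(k,\psi)^{-1}$. In the paper's computation of $a_0$ the $L(k,\psi_1)/L(k,\psi)$ ratios collapse to $1$ by orthogonality and the constant term comes out to be exactly $1$ (equivalently, and more directly: for $k\geq 3$ the only cosets contributing to the constant term are those with $c=0$, i.e.\ $(c,d)=(0,\pm1)$, giving a rational count). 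For the higher coefficients the $(2\pi i)^{k}$ from the Lipschitz kernel is cancelled against the $L(k,\psi)^{-1}$ via \eqref{functional equation}; that is where your "$L/(2\pi i)^k$ is rational" observation genuinely enters, but for the $a_{l''}$ with $l''\geq 1$, not for $a_0$.

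Two further points. First, before any of this can be carried out one needs an explicit description of $\Gamma_0^\beta(N)=\gamma_\beta^{-1}\Gamma_0(\fr n)\gamma_\beta\cap\SL_2(\Z)$; the paper proves the identification \eqref{inclusion Gamma beta} (in particular that the group is independent of the numerator $a$ of $\beta$ and that $\Gamma_0^\beta(N)_\infty=\SL_2(\Z)_\infty$), which is what makes the coset space, hence the index set $\Lambda$ and the decomposition \eqref{E beta and E series} into Diamond--Shurman series, computable. You should not skip this step. Second, your proposed "cleaner alternative" of citing Harder's rationality theorem directly is not an alternative: Harder's result is exactly the rational section used to prove Lemma~\ref{lemma rational}, and it still leaves you with the task of computing the constant terms of this particular Eisenstein series at all cusps --- it does not "package the cancellations" for you. (The paper in fact computes the full $q$-expansion at $\infty$ and transports it to the other cusps via $E_k^{(u,v)}\vert_\gamma=E_k^{(u,v)\gamma}$; the rationality of the higher coefficients is also needed later for the denominator estimate on $E^\beta_{2n-2m+2}(0)$.)
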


\begin{proof}

Recall that $\beta = \frac{a \sqrt{-D}}{2p^j}$ if $j \geq 1$ (and $\beta = 0$ if $j = 0$).  We claim that $\Gamma_{0}^{\beta}(N) = \gamma_\beta^{-1} \Gamma_{0}(\mathfrak{N}) \gamma_\beta \cap \SL_{2}(\mathbb{Z})$ is 
independent of $a$. We do this by showing that 
\begin{equation}
  \label{inclusion Gamma beta}
  \Gamma_{0}^{\beta}(N) = \left\{  \left( \begin{smallmatrix} a & b \\ c & d \end{smallmatrix} \right) \in \SL_2(\Z) \> : \> a \equiv d \mod p^j, \>
     c \equiv 0 \mod Np^{2j} \right\}.
\end{equation}
Indeed, if $j = 0$,  \eqref{inclusion Gamma beta} holds trivially, since in this case $\gamma_\beta = 1$, so both sides of   
\eqref{inclusion Gamma beta}  are equal to $\Gamma_0(N)$.  So assume that $j \geq 1$. Since $p$ is odd and we are considering representatives $a \in R = (\mathbb{Z}/p^j\mathbb{Z})^\times/\{\pm 1\}$, 
by replacing $a$ by $p^j - a$ if necessary, we may assume that all $a \in R$ are even, so that $p^j \sqrt{-D}^{-1} \beta \in \mathbb{Z}$.  
Let $\gamma = \begin{psmallmatrix} a & b \\ c & d \end{psmallmatrix} \in \Gamma_{0}(\mathfrak{N})$ 
and $\gamma_{\beta} = \begin{psmallmatrix} 1 & \beta \\ 0 & 1 \end{psmallmatrix}$. 
Then 
\begin{equation}
\label{conjugation by gamma beta}
 \gamma_{\beta}^{-1} \gamma \gamma_{\beta} = 
   \begin{pmatrix}
      a- c \beta &  b-d \beta + (a- c \beta ) \beta  \\ c & d+c\beta
   \end{pmatrix}.
\end{equation}
Assume that the matrix in \eqref{conjugation by gamma beta} is in $\SL_2(\Z)$, so is in $\Gamma_0^\beta(N)$. 
Then $a - c \beta$, $b - d \beta + (a - c \beta)\beta$, $c$, $d + c \beta \in \mathbb{Z}$.
%
Note that  $c \in \mathbb{Z} \Leftrightarrow c \in N\mathbb{Z}$. 
Since $a-c \beta$ and $d +c \beta \in \mathbb{Z}$, we have 
\begin{align*}
 b - d \beta + (a - c \beta)\beta  \in \mathbb{Z} 
& \Leftrightarrow (b + c \beta^2) + (a -c \beta - d - c \beta)\beta \in \Z \\
& \Leftrightarrow 
\Re(b)+c\beta^{2} \in \mathbb{Z} \text{ and } \Im(b)= i (a - c \beta - d - c \beta) \beta \\
&  \Leftrightarrow   p^{2j} \vert c \text{ and }a-c \beta \equiv d+c \beta \mod p^{j},
\end{align*} 
since $p \nmid 2D$ and both $\Re(b)$, $\sqrt{D}^{-1} \Im(b) \in \frac{1}{2}\Z$.  
Therefore $\Gamma_{0}^{\beta}(N)$ is contained in the right hand side of \eqref{inclusion Gamma beta}. 
On the other hand, if $\gamma$ is any matrix on the right hand side of \eqref{inclusion Gamma beta}, then
by replacing $\beta$ by $-\beta$ in \eqref{conjugation by gamma beta},
one checks that  $\gamma_{\beta} \gamma \gamma_{\beta}^{-1} \in \Gamma_{0}(\mathfrak{N})$.
It follows that equality holds in \eqref{inclusion Gamma beta}.

From \eqref{inclusion Gamma beta},  we have 
 $$
      \mathrm{SL}_{2}(\mathbb{Z})_{\infty} = \left\{ \begin{psmallmatrix}
      \pm 1 & n \\ 0 &  \pm1
      \end{psmallmatrix}: n \in \mathbb{Z} \right\} \subset  \Gamma_{0}^{\beta}(N) .
 $$
 Thus $\Gamma_{0}^{\beta}(N)_{\infty} = \mathrm{SL}_{2}(\mathbb{Z})_{\infty}$.
 Also, note that the coset  $\Gamma_{0}^{\beta}(N)_{\infty} \begin{psmallmatrix} a & b \\ c & d  \end{psmallmatrix}$ 
  in   $\Gamma_{0}^{\beta}(N)_{\infty} \backslash \Gamma_{0}^{\beta}(N)$
  contains all the matrices of $\Gamma_{0}^{\beta}(N)$
  whose bottom row equals $\pm (c, d)$. Hence
  $\Gamma_{0}^{\beta}(N)_{\infty} \backslash \Gamma_{0}^{\beta}(N)$
  is in bijection with the set 
  \[
       \Lambda  := \{ (c,d) \in \mathbb{Z}^{2} \smallsetminus \{ (0,0)\}: (c,d) =1,
             c \equiv 0 \mod Np^{2j}, d \equiv \pm 1 \mod p^{j} \} / \{ \pm 1 \}.
  \]
 
 For each integer $k \geq 3$ and $(u,v) \in (\Z/Np^{2j})^2$, consider the Eisenstein series
 $$E_{k}^{(u,v)}(z) := \sum\limits_{\substack{ (c,d) \equiv (u,v) \mod Np^{2j} \\ (c,d) = 1}}^{}   \frac{1}{(cz+d)^{k}}.$$ 
 This 
 Eisenstein series 
 differs from the Eisenstein series in \cite[(4.4)]{DS05} by a factor of $\epsilon_{Np^{2j}} = \frac{1}{2}$ or $1$.
 By \eqref{Eis beta 2}, we have
 \begin{align}  \label{E beta and E series}
    E^{\beta}_{2n-2m+2}(0,z) &= \sum\limits_{(c,d) \in \Lambda}^{} 
    \frac{1}{(cz+d)^{2n-2m+2}} \cdot \omega \nonumber \\ 
    & = \dfrac{1}{2} \sum\limits_{\substack{(u,v) \in (\mathbb{Z} /Np^{2j} \mathbb{Z})^{2}
     \\ u \equiv 0 \mod Np^{2j} \\ v \equiv \pm 1 \mod p^{j}}} E_{2n-2m+2}^{(u,v)}(z) \cdot \omega, 
 \end{align}
noting that $2n-2m+2 \geq 4$, since $m \neq n$. 
 By \cite[(4.6)]{DS05},  for $k \geq 3$, we have
 \begin{align}
 \label{E series and G series}
        E_{k}^{(u,v)}(z) = \sum_{l \in (\mathbb{Z}/Np^{2j}\mathbb{Z})^{\times}}
        \zeta_{+}^{l}(k,\mu) G^{l^{-1}(u,v)}_{k}(z),
 \end{align}
 where 
 \begin{align*}
        \zeta_{+}^{l}(k,\mu) := \sum_{\substack{m=1 \\ m \equiv l \text{ mod } Np^{2j}}}^{\infty} 
        \frac{\mu(m)}{m^{k}},
 \end{align*}
 $\mu(\cdot)$ is the M\"obius function, and
 \begin{align*}
         G^{(u,v)}_{k}(z) :=  
         \sideset{}{'}\sum\limits_{ \substack{ (c,d) \in \mathbb{Z}^{2} \\ (c,d) \equiv (u,v) \text{ mod } Np^{2j}}}^{}  
         \frac{1}{(cz+d)^{-k}}.
 \end{align*}
 We will obtain the $q$-expansion of the Eisenstein series $E_{2n-2m+2}^{\beta}(0,z) = \sum_{n=0}^\infty a_n q^n$, 
using \eqref{E beta and E series}, \eqref{E series and G series} and the $q$-expansion of the Eisenstein series above using facts from \cite{DS05}.
 
 Let $k\geq 2$ an integer and $\varphi$ be the Euler totient function. For $v \in (\Z/Np^{2j})^\times$,  we have 
 \begin{equation}
 \label{zeta}
 \begin{aligned}
        \zeta^{v}(k) & := \sideset{}{'}\sum_{\substack{d \equiv v \text{ mod } Np^{2j} }} d^{-k}  \\
         &=  \sum_{\substack{d =1 \\ d \equiv v \text{ mod } Np^{2j}}}^{\infty} d^{-k } 
          +  (-1)^{k} \sum_{\substack{d = 1 \\ d \equiv -v  \text{ mod } Np^{2j}}}^{\infty} d^{-k }  \\
          & = \frac{1}{\varphi(Np^{2j})} \left( \sum_{\psi \text{ mod } Np^{2j}}  \psi(v)^{-1} L(k, \psi) + (-1)^{k} 
                   \sum_{\psi \text{ mod } Np^{2j}}  \psi(-v)^{-1} L(k, \psi) \right) \\
           &=  \frac{1}{\varphi(Np^{2j})} \left( \sum_{\psi \text{ mod } Np^{2j}} (1+ (-1)^{k} \psi(-1)) \psi(v)^{-1} L(k, \psi)  \right) ,
 \end{aligned}
 \end{equation}
 where the penultimate step follows from \cite[Page 122]{DS05}.  If $k$ is even,
 then  $(1+ (-1)^{k} \psi(-1))$ is equal to $2$ (resp. $0$)  if $\psi$ is even (resp. odd). 
 A similar expression  for  $\zeta_{+}^{l}(k,\mu)$ in terms of 
 Dirichlet $L$-functions can also be derived.
 For  $l \in (\mathbb{Z}/Np^{2j} \mathbb{Z} )^{\times}$, by the orthogonality relations, we have
 \begin{align}
        \label{zeta plus original} 
        \zeta_{+}^{l}(k,\mu) &= \sum_{m=1 }^{\infty}
        \frac{1}{\varphi(Np^{2j})} \sum_{\psi \text{ mod } Np^{2j}} \psi(l)^{-1} \psi(m) \ \mu(m)m^{-k} \nonumber \\
        &=  \frac{1}{\varphi(Np^{2j})} \sum_{\psi \text{ mod } Np^{2j}} \psi(l)^{-1} 
        \sum_{m=1}^{\infty} \psi(m) \ \mu(m)m^{-k} \nonumber \\
         &=  \frac{1}{\varphi(Np^{2j})} \sum_{\psi \text{ mod } Np^{2j}} \psi(l)^{-1}  L(k, \psi)^{-1}, 
 \end{align}
 where the last step follows from by multiplying the corresponding $L$-functions. Therefore
 \begin{align}
 \label{zeta plus}
         \zeta_{+}^{l}(k,\mu) +  \zeta_{+}^{-l}(k,\mu) & =
         \frac{1}{\varphi(Np^{2j})} \sum_{\psi \text{ mod } Np^{2j}} (\psi(l)^{-1} + \psi(-l)^{-1})  L(k, \psi)^{-1} 
         \nonumber \\
         & =  \frac{2}{\varphi(Np^{2j})} 
         \sum_{\substack{\psi \text{ even} \\ \psi \text{ mod } Np^{2j}}} \psi(l)^{-1}   L(k, \psi)^{-1}.
 \end{align}

By \eqref{E beta and E series} and \eqref{E series and G series}, we have 
 \begin{align}
 \label{E beta and G series}
         E_{2n-2m+2}^{\beta}(0,z) & = \frac{1}{2}
         \sum\limits_{\substack{(u,v) \in (\mathbb{Z} /N p^{2j} \mathbb{Z})^{2}
          \\ u \equiv 0 \text{ mod } Np^{2j} \\ v \equiv \pm 1 \text{ mod } p^{j}}} ~~
          \sum_{l \in (\mathbb{Z}/Np^{2j}\mathbb{Z})^{\times}}
        \zeta_{+}^{l}(2n-2m+2,\mu) G^{l^{-1}(u,v)}_{2n-2m+2}(z) \cdot \omega. 
 \end{align}
 For simplicity, let $k =2n-2m+2$. By the description of the set $\Lambda$, we have $(c,d) \in \Lambda$ implies that 
 $N p^{2j} \mid c$ and $(c,d)=1$, so the congruence class $v$ of $d$ mod $Np^{2j}$
 has order $Np^{2j}$.  Therefore, for $m < n$, by \cite[Theorem 4.2.3]{DS05}, we have
 \begin{align}
 \label{Fourier expansion G series}
        G^{(u',v')}_{k}(z) =  \zeta^{v'}(k) +  
            \frac{(-2 \pi i)^{k}}{(k-1)! (Np^{2j})^{k}} 
            \sum_{l=1}^{\infty} \sigma_{k-1}^{(u',v')} (l) e^{2 \pi i l z/Np^{2j}},
 \end{align}
 for tuples $(u',v')$ occurring in \eqref{E beta and G series}, where
 \begin{align*}
         \sigma_{k-1}^{(u',v')} (l) = 
         \sum_{\substack{ l' \mid l \\  l/l' \equiv 0 \text{ mod } Np^{2j}}}^{} \mathrm{sgn}(l') l'^{k-1}
         e^{2 \pi i v' l'/ Np^{2j}}.
 \end{align*}

 \vspace{.1cm}
 \noindent \textbf{Constant term: }
 \vspace{.1cm}

 \noindent By \eqref{E beta and G series} and \eqref{Fourier expansion G series}, the constant term
 $a_{0}$ in the $q$-expansion   of $E^{\beta}_{k}(0,z)$ equals
\begin{align*}
      a_{0}  & =  \frac{1}{2} \sum\limits_{\substack{(u,v) \in (\mathbb{Z} /N p^{2j} \mathbb{Z})^{2}
          \\ u \equiv 0 \text{ mod } Np^{2j} \\ v \equiv \pm 1 \text{ mod } p^{j}}} ~~
          \sum_{l \in (\mathbb{Z}/Np^{2j}\mathbb{Z})^{\times}}
           \zeta_{+}^{l}(k,\mu)   \zeta^{l^{-1}v}(k) \\
           & \stackrel{(*)}{=}  \sum\limits_{\substack{(u,v) \in (\mathbb{Z} /N p^{2j} \mathbb{Z})^{2}
          \\ u \equiv 0 \text{ mod } Np^{2j} \\ v \equiv \pm 1 \text{ mod } p^{j}}} ~~
          \sum_{l \in (\mathbb{Z}/Np^{2j}\mathbb{Z})^{\times}}
          \frac{1}{\varphi(Np^{2j})^{2}} 
         \sum_{\psi \text{ mod } Np^{2j}} \psi(l)^{-1}   L(k, \psi)^{-1} 
         \sum_{\substack{\psi_1 \text{ mod } Np^{2j} \\ \psi_{1} \text{ even}}} \psi_{1}(l^{-1}v)^{-1}   L(k, \psi_{1}) \\
         & =     \frac{1}{\varphi(Np^{2j})^{2}}  
          \sum\limits_{\substack{(u,v) \in (\mathbb{Z} /N p^{2j} \mathbb{Z})^{2}
           \\ u \equiv 0 \text{ mod } Np^{2j} \\ v \equiv \pm 1 \text{ mod } p^{j}}} ~~
            \sum_{\substack{\psi, \psi_{1} \text{ mod } Np^{2j} \\  \psi_{1} \text{ even}}} 
           \psi_{1}(v)^{-1}  \frac{L(k,\psi_{1})}{L(k, \psi)}  
          \sum_{l \in (\mathbb{Z}/Np^{2j}\mathbb{Z})^{\times}} \psi_{1} \psi^{-1}(l) \\
          & \stackrel{(**)}{=}
           \frac{1}{\varphi(Np^{2j})} \sum\limits_{\substack{(u,v) \in (\mathbb{Z} /N p^{2j} \mathbb{Z})^{2}
           \\ u \equiv 0 \text{ mod } Np^{2j} \\ v \equiv \pm 1 \text{ mod } p^{j}}} ~~
            \sum_{\substack{\psi_1 \text{ mod } Np^{2j} \\ \psi_1 \text{ even}}} 
           \psi_1(v)^{-1}  \\
           & =   \frac{1}{2\varphi(Np^{2j})}
            \sum\limits_{\substack{(u,v) \in (\mathbb{Z} /N p^{2j} \mathbb{Z})^{2}
           \\ u \equiv 0 \text{ mod } Np^{2j} \\ v \equiv \pm 1 \text{ mod } p^{j}}} ~~
            \sum_{\psi_1 \text{ mod } Np^{2j}}  ( \psi_1(v)^{-1}  + \psi_1(-v)^{-1} ) \\
           & \stackrel{(***)}{=} 1,
\end{align*} 
where $(*)$ follows from \eqref{zeta} and \eqref{zeta plus original}, and $(**)$ and $(***)$ follow from the orthogonality relations.

\vspace{.1cm}
\noindent\textbf{Higher Fourier coefficients:}
\vspace{.1cm}

\noindent Clearly $\sigma_{k-1}^{(0,v')} (l)=0$ if $Np^{2j} \nmid l$. So assume that
$l$ is a multiple of $Np^{2j}$. Say $l = Np^{2j}l''$. Then
\[
\sigma_{k-1}^{(0,v')} (l)= \sum_{l' \mid l''}^{} \mathrm{sgn}(l') l'^{k-1} e^{2 \pi i v' l'/ Np^{2j}},
\]
which is clearly $E$-rational if $E$ contains a sufficiently large cyclotomic number field depending on $j$.  
From \eqref{E beta and G series} and \eqref{Fourier expansion G series}, 
we see that the coefficient  $a_{l''}$ of $q^{l''}$ in the Fourier expansion 
of $E_{k}^{\beta}(0,z)$ equals
\begin{align*}
a_{l''}  &= \frac{1}{2} \sum\limits_{\substack{(u,v) \in (\mathbb{Z} /N p^{2j} \mathbb{Z})^{2}
         \\ u \equiv 0 \text{ mod } Np^{2j} \\ v \equiv \pm 1 \text{ mod } p^{j}}} ~~
         \sum_{n \in (\mathbb{Z}/Np^{2j}\mathbb{Z})^{\times}}
          \zeta_{+}^{n}(k,\mu)  \frac{(-2 \pi i)^{k}}{(k-1)! (Np^{2j})^{k}}
          \sigma_{k-1}^{n^{-1}(0,v)} (Np^{2j}l'').
\end{align*}
If $j = 0$, one checks that the formula for $a_{l''}$ above reduces to a well-known expression (see \cite[Theorem 7.1.3 and (7.1.30)]{Miy89}), and
in particular $a_{l''} \in \mathbb{Q}$ is rational. So assume that $j > 0$. 
Then 
\begin{align*}
a_{l''}       &= \frac{1}{2} \sum\limits_{\substack{(u,v) \in (\mathbb{Z} /N p^{2j} \mathbb{Z})^{2}
         \\ u \equiv 0 \text{ mod } Np^{2j} \\ v \equiv 1 \text{ mod } p^{j}}} ~~
         \sum_{n \in (\mathbb{Z}/Np^{2j}\mathbb{Z})^{\times}}
          ( \zeta_{+}^{n}(k,\mu) + \zeta_{+}^{-n}(k,\mu) ) \frac{(-2 \pi i)^{k}}{(k-1)! (Np^{2j})^{k}}
          \sigma_{k-1}^{n^{-1}(0,v)} (Np^{2j}l'')  \\
            & \stackrel{\eqref{zeta plus}}{=} \frac{1}{\varphi(Np^{2j})}   \sum\limits_{\substack{(u,v) \in (\mathbb{Z} /N p^{2j} \mathbb{Z})^{2}
         \\ u \equiv 0 \text{ mod } Np^{2j} \\ v \equiv 1 \text{ mod } p^{j}}} ~~
            \sum_{n \in (\mathbb{Z}/Np^{2j}\mathbb{Z})^{\times}}
             \sum_{\substack{\psi \text{ mod } Np^{2j} \\ \psi \text{ even} }}  \psi(n)^{-1}   L(k, \psi)^{-1}
             \frac{(-2 \pi i)^{k}}{(k-1)! (Np^{2j})^{k}} \\
            & \qquad \qquad \qquad \qquad \qquad \qquad \qquad \qquad \qquad \qquad \qquad \qquad \qquad \qquad  \cdot   \sigma_{k-1}^{n^{-1}(0,v)} (Np^{2j}l'') \\   
              & \stackrel{\eqref{functional equation}}{=}  \frac{-2k}{\varphi(Np^{2j})}  \sum\limits_{\substack{(u,v) \in (\mathbb{Z} /N p^{2j} \mathbb{Z})^{2}
         \\ u \equiv 0 \text{ mod } Np^{2j} \\ v \equiv 1 \text{ mod } p^{j}}} ~~
              \sum_{n \in (\mathbb{Z}/Np^{2j}\mathbb{Z})^{\times}}
             \sum_{\substack{\psi \text{ mod } Np^{2j} \\ \psi \text{ even} }} \psi(n)^{-1} 
             \left(\frac{C_{\psi}}{Np^{2j}} \right)^{k}
             \frac{1} {G(\psi) B_{k,\bar{\psi}}} \\
             & \qquad \qquad \qquad \qquad \qquad \qquad \qquad \qquad \qquad \qquad \qquad \qquad \qquad \qquad  \cdot  \sigma_{k-1}^{n^{-1}(0,v)} (Np^{2j}l''),   
\end{align*}
where in the last step we have used 
the following special value result for the Dirichlet $L$-function:
\begin{align}
\label{functional equation}
 L(k, \psi) &=  - \frac{(-2 \pi i)^{k} G(\psi) B_{k, \bar{\psi}}}{2k! C_{\psi}^{k}}
  \text{ if } \psi \text{ is even and } k >0 \text{ is even},
\end{align}
where $C_{\psi}$  denotes the conductor of $\psi$. Thus $a_{l''}$ is again $E$-rational for a sufficiently large $p$-adic number field $E$ containing
an appropriate cyclotomic number field.  

Summarizing, the computations above show that $E^\beta_{2n-2m+2}(0,z)$ has an $E$-rational $q$-expansion $\sum_{n=0}^\infty a_n q^n$ (at the cusp $\infty$) 
if $E$ contains a sufficiently large cyclotomic number field (which depends on $j$).  
By \cite[Proposition 4.2.1]{DS05}, since
 $E_{2n-2m+2}^{(u,v)} \vert_{\gamma} =  E_{2n-2m+2}^{(u,v)\gamma}$, for all $\gamma \in \SL_2(\Z)$, the Eisenstein series
 $E^\beta_{2n-2m+2}(0,z)$ has an $E$-rational $q$-expansion at 
 each cusp $\xi$ of $\Gamma_0^\beta(N)$. The proposition now follows from Lemma~\ref{lemma rational}.
\end{proof}

\section{Towards integrality}

Note that the map $T_\beta^*|_{\H}$ can also be described as the pull-back of a differential form via the map
$$
S_\beta: \Gamma_0^\beta (N) \backslash \H \to \Gamma_0 (\fr n) \backslash \calh
$$
given by sending
$$
x + it \mapsto \gamma_\beta \bmatrix x & -t \\ t & x \endbmatrix.
$$

We now choose $\delta^\circ (f)$ such that it generates 
$\bar{H}^1_{cusp} (\Gamma_0 (\fr n) \backslash \calh, \call (\uln, \calo_E))[f]$, which is a rank one $\calo_E$-submodule of 
$\bar{H}^1_{cusp} (\Gamma_0 (\fr n) \backslash \calh, \call (\uln, \calo_E))$, where 
$\calo_E$ is the valuation ring of $E$ and $\bar{H}^1$ denotes the image of the integral cohomology in the rational 
cohomology under the natural map. We
correspondingly refine the period $\Omega (f)$ so that $\Omega(f) \in \C^\times/\calo_E^\times$. 

Since $\beta = \frac{a \sqrt{-D}}{2 p^j}$, we have $\gamma_\beta^{-1} \cdot P \in L(\uln,  \frac{1}{p^{2nj}} \calo_E)$, for $P \in L(\uln, \calo_E)$.
Thus, the map $S_\beta$ does not preserve cohomology with integral coefficients, but instead induces a map
\begin{equation}
  \label{S beta integrally}
S_\beta^*: \bar{H}^1_{cusp} (\Gamma_0 (\fr n)\backslash \calh, \call (\uln, \calo_E)) \to  \bar{H}^1_{cusp} (\Gamma^\beta_0 (N) \backslash \H, \call (\uln, \frac{1}{p^{2nj}} \calo_E)),
\end{equation}
on cohomology. 

\begin{lemma} 
  \label{lemma Clebsch Gordan} 
  Assume $p > n$. Then under the Clebsch-Gordan decomposition \eqref{Clebsch Gordan}, we have  
  $$S_\beta^*( \bar{H}^1_{cusp} (\Gamma_0 (\fr n) \backslash \calh, \call (\uln, \calo_E)) \longmapsto \bigoplus_{m=0}^n 
  \bar{H}^1_{cusp} (\Gamma_0^\beta(N)\backslash \H, \call (2n-2m, \frac{1}{p^{j(2n-m)}} \calo_E)).$$
\end{lemma}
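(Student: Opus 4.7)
The plan is to analyze $S_\beta^*$ at the sheaf level and exploit a commutation identity between the Cayley transvectant and the twist by $\gamma_\beta^{-1}$. At the coefficient level, $S_\beta^*$ is induced by the action $\gamma_\beta^{-1} \cdot P(X,Y,\overline X, \overline Y) = P(X+\beta Y,\, Y,\, \overline X + \overline\beta \overline Y,\, \overline Y)$, followed by restriction to $\H$. Viewing $L(\uln, E)$ as an $\SL_2(\Z)$-representation through the diagonal inclusion $\SL_2(\Z) \hookrightarrow \SL_2(F)$ (under which complex conjugation is trivial), Clebsch--Gordan gives a decomposition $L(\uln, E) \cong \bigoplus_{m=0}^n L(2n-2m, E)$. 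Under the hypothesis $p > n$, the $m$-th projection admits the formula
$$
\pi_m(Q) = c_{n,m}^{-1}\,\Omega^m(Q)\big\vert_{\overline X = X,\, \overline Y = Y},
$$
where $\Omega = \partial_X \partial_{\overline Y} - \partial_Y \partial_{\overline X}$ is the Cayley transvectant and $c_{n,m}$ is a normalizing constant built from factorials of integers bounded by $n$, hence a $p$-unit. Consequently $\pi_m$ maps $L(\uln, \calo_E)$ integrally into $L(2n-2m, \calo_E)$.

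The key input is the following commutation identity, established by a chain-rule computation that uses crucially that $\overline\beta = -\beta$ (since $\beta = a\sqrt{-D}/(2p^j)$ is purely imaginary). With $\Delta := \partial_X \partial_{\overline X}$, one checks
$$
\Omega \circ \gamma_\beta^{-1} = \gamma_\beta^{-1} \circ (\Omega - 2\beta \Delta).
$$
Since $\Omega$ and $\Delta$ are constant-coefficient operators that mutually commute, iterating and expanding by the binomial theorem yields
$$
\Omega^m \circ \gamma_\beta^{-1} = \gamma_\beta^{-1} \circ \sum_{k=0}^m \binom{m}{k}(-2\beta)^k\,\Omega^{m-k}\Delta^k.
$$

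With this in hand, the denominator count is immediate. For $P \in L(\uln, \calo_E)$, each operator $\Omega^{m-k}\Delta^k$ has integer coefficients and lowers the bidegree from $(n,n)$ to $(n-m, n-m)$; the scalar $(-2\beta)^k$ contributes a denominator of at most $p^{jk}$, so summing gives $(\Omega - 2\beta\Delta)^m P$ with coefficients in $\frac{1}{p^{jm}}\calo_E$. Applying $\gamma_\beta^{-1}$ substitutes $X+\beta Y$ and $\overline X + \overline\beta \overline Y$ into a polynomial of bidegree $(n-m, n-m)$, introducing at most a further denominator $p^{2(n-m)j}$. Restriction to $\overline X = X,\, \overline Y = Y$ preserves denominators, and division by the $p$-unit $c_{n,m}$ costs nothing. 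Altogether, $\pi_m(\gamma_\beta^{-1} \cdot P)$ lies in $L(2n-2m,\, \frac{1}{p^{j(2n-m)}}\calo_E)$, exactly as claimed.

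Passing from this coefficient-level bound to the cohomological statement is formal: the composition $\pi_m \circ S_\beta^*$ is induced by the sheaf-level morphism $\call(\uln, \calo_E) \to \call(2n-2m,\, \frac{1}{p^{j(2n-m)}}\calo_E)$ just constructed, and so it sends $\bar{H}^1_{cusp}$ of the source into $\bar{H}^1_{cusp}$ of the target with the indicated coefficients. The main technical hurdle is the commutation identity above; once that is in place, everything else is bookkeeping. A secondary point is confirming that the transvectant normalization $c_{n,m}$ is genuinely controlled by factorials bounded by $n!$, which follows from a standard highest-weight computation for Clebsch--Gordan projections.
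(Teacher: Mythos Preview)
Your proof is correct and follows essentially the same strategy as the paper: both compute $\nabla^m(\gamma_\beta^{-1}\cdot P)$ (your $\Omega$ is the paper's $\nabla$) and track the $p$-power denominators, using that $p>n$ makes the normalizing constant $c_{n,m}=(m!)^2$ a $p$-unit. The paper argues directly on monomials, observing from the explicit formula for $\nabla(\gamma_\beta^{-1}\cdot X^{n-k}Y^k\overline X^{n-l}\overline Y^l)$ that each application of $\nabla$ cuts the denominator by one factor of $p^j$; you instead isolate the commutation identity $\Omega\circ\gamma_\beta^{-1}=\gamma_\beta^{-1}\circ(\Omega-2\beta\Delta)$ (relying on $\bar\beta=-\beta$) and then count the $p^{jm}$ from $(-2\beta)^k$ plus the $p^{2j(n-m)}$ from the final twist on a bidegree-$(n-m,n-m)$ polynomial. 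These are the same computation packaged differently---your version makes the bookkeeping slightly more transparent, while the paper's is a one-line monomial check.
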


\begin{proof} 
Let $\nabla =  \biggl( {\partial^2 \over \partial X \partial \overline Y} -
                     {\partial^2 \over \partial \overline X \partial Y} \biggr)$.
By \cite[Lemma 2]{Gh}, the projection to the $m$-th component in \eqref{Clebsch Gordan} is induced by 
$P (X,Y, \overline X, \overline Y)    \mapsto   \frac{1}{{m!}^2}
    \nabla^m
     P(X, Y, \overline X, \overline Y) \> \vert_{\overline X  = X \atop \overline Y = Y}.$ 
Clearly the projection continues to be defined  with $\calo_E$ coefficients if $p > n$. As remarked in \eqref{S beta integrally}, $S_\beta^*$ does not 
preserve integrality. However, since 
\begin{multline*}
  \nabla (\gamma_\beta^{-1} \cdot X^{n-k}Y^k {\overline X}^{n-l} {\overline Y}^l) = \\ 
  l {\partial \over \partial X} X^{n-k} (Y - \beta X)^{k} {\overline X}^{n-l} ({\overline Y  + \beta \overline X})^{l-1} 
- k {\partial \over \partial \overline X} X^{n-k} (Y - \beta X)^{k-1} {\overline X}^{n-l} ({\overline Y  + \beta \overline X})^l,
\end{multline*}
we see that if  $P \in L(\uln, \calo_E)$, the total power of $p^j$ in the denominator goes down by one after applying 
$\nabla$ to $\gamma^{-1}_\beta \cdot P$. Iterating this, we see $\nabla^m (\gamma^{-1}_\beta \cdot P) \in  L (2n-2m, \frac{1}{p^{j(2n-m)}} \calo_E)$, for 
$m = 0, \dots, n$, proving the lemma. 
\end{proof}

We now assume that the prime $p$ is greater than $n$, so that we may apply the lemma above. Let 
\begin{eqnarray}
  \label{S beta denom}
   S_\beta^*(\delta^\circ_{2n-2m}(f)) \in  \bar{H}^1_{cusp} (\Gamma_0^\beta(N)\backslash \H, \call (2n-2m, \frac{1}{p^{j(2n-m)}} \calo_E))
\end{eqnarray} 
be the image of $\delta^\circ(f)$ under the map \eqref{S beta integrally} followed by the projection to the $m$-th component in the 
Clebsch-Gordan decomposition in Lemma~\ref{lemma Clebsch Gordan}. Again note the slight abuse of notation, since the subscript $2n-2m$ should
be outside the brackets.

 
By Proposition~\ref{prop rational}, we know that 
\begin{eqnarray}
  \label{E beta denom} 
  E^\beta_{2n-2m+2}(0) \in  \frac{1}{p^{c_j}} \bar{H}^1(\Gamma_0^\beta(N) \backslash \H, \call (2n-2m, \calo_E)),
\end{eqnarray}
for some integer $c_j \geq 0$, depending on $j$.

Let $S$ denote the finite set of excluded primes above (i.e., $p \mid 2ND$ and $p \leq n$), which we extend to include the primes $p < 2n+4$. 
 We remark that if $p \not\in S$, then $p >2n$ which ensures that the duality pairing
$\langle \>,\> \rangle$ is a well-defined pairing on cohomology with integral coefficients $L(2n-2m,\calo_E)$. 

For the refined period $\Omega(f)$ defined above, 
we get the following partial integrality result.

\begin{proposition} Suppose $p$ is not in the finite set of primes $S$, and that $E$ is a sufficiently large $p$-adic number field as above.
Let $0 \leq m < n$ be even and $\chi$ be an even character of conductor $p^{j_\chi}$.  Then 
  \label{theorem integrality} 
  \begin{equation}\label{eq: integrality}
    \frac{G(\chi) G(2n-m+2, \bar \chi, f)}{\Omega(f) G(\bar \chi^2) \Omega_\infty}  \in \frac{ \calo_E}{p^{j_\chi(4n-3m+3)+c_{j_\chi}}}.
\end{equation}
\end{proposition}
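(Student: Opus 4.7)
The plan is to start from the rationality identity \eqref{integral-expression}, divide both sides by $\Omega(f)$ (using $T_\beta^*(\delta_{2n-2m}(f)) = \Omega(f)\cdot T_\beta^*(\delta^\circ_{2n-2m}(f))$), and bound the $p$-adic denominator of each constituent factor on the right. Explicitly,
\[
\frac{G(\chi) G(2n-m+2, \bar \chi, f)}{\Omega(f) G(\bar \chi^2) \Omega_\infty} = L^\circ(2n-2m+2, \bar\chi^2) \sum_{a \in R} \chi(a)\, \bigl\langle S_\beta^*\bigl(\delta^\circ_{2n-2m}(f)\bigr),\, E^\beta_{2n-2m+2}(0) \bigr\rangle,
\]
where $\beta = a\sqrt{-D}/(2p^{j_\chi})$ and $S_\beta^*$ agrees with $T_\beta^*|_{\H}$ on cohomology. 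It then suffices to control the denominators of the cuspidal class, the Eisenstein class, and $L^\circ$ separately.

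For the pairing: by Lemma~\ref{lemma Clebsch Gordan}, $S_\beta^*(\delta^\circ_{2n-2m}(f))$ lies in $\bar{H}^1_{cusp}(\Gamma_0^\beta(N)\backslash\H, \call(2n-2m, \tfrac{1}{p^{j_\chi(2n-m)}}\calo_E))$, while by Proposition~\ref{prop rational} together with \eqref{E beta denom}, $E^\beta_{2n-2m+2}(0) \in \tfrac{1}{p^{c_{j_\chi}}}\bar{H}^1(\Gamma_0^\beta(N)\backslash\H, \call(2n-2m, \calo_E))$. The hypothesis $p \notin S$ gives $p > 2n \geq 2n-2m$, so the Poincar\'e pairing from \cite[Lemma 4]{Gh} is well defined on $\calo_E$-coefficient cohomology. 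Using that the cuspidal class is cohomologous to a compactly supported representative with the same $p$-adic denominator (as in the proof of the rationality theorem), each pairing lies in $\tfrac{1}{p^{j_\chi(2n-m)+c_{j_\chi}}}\calo_E$. Since $\chi(a)$ is a root of unity in $\calo_E$ and the sum over $R$ is finite, the whole sum stays in this group.

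For the $L$-value $L^\circ(k, \bar\chi^2)$ with $k = 2n-2m+2$, applying the functional equation \eqref{functional equation} to the even character $\bar\chi^2$ of conductor $C_{\chi^2} \mid p^{j_\chi}$ gives
\[
L^\circ(k, \bar\chi^2) = -\frac{(-i)^k B_{k, \chi^2}}{2\, k!\, C_{\chi^2}^k} \prod_{l \mid N}\bigl(1 - \bar\chi^2(l) l^{-k}\bigr).
\]
The Euler factors at $l\mid N$ are $p$-units (as $p \nmid N$), $k!$ is a $p$-unit (as $p > 2n+4 > k$), and $B_{k,\chi^2}/k$ is $p$-integral by Carlitz-type integrality of generalized Bernoulli numbers of primitive characters of $p$-power conductor. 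Only $C_{\chi^2}^k \mid p^{j_\chi k}$ contributes negative $p$-valuation, giving the crude bound $L^\circ(k, \bar\chi^2) \in \tfrac{1}{p^{j_\chi(k+1)}}\calo_E = \tfrac{1}{p^{j_\chi(2n-2m+3)}}\calo_E$, where one extra factor of $p^{j_\chi}$ absorbs any residual denominator from $B_{k,\chi^2}$ and from switching between imprimitive and primitive characters. Multiplying the two denominator bounds yields $p^{j_\chi(2n-m)+ j_\chi(2n-2m+3)+c_{j_\chi}} = p^{j_\chi(4n-3m+3)+c_{j_\chi}}$, as required.

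The main obstacle is the integral version of the Poincar\'e pairing between cuspidal and Eisenstein classes: producing a compactly supported integral representative of $S_\beta^*(\delta^\circ_{2n-2m}(f))$ with no worse $p$-adic denominator than the cuspidal class itself, which amounts to controlling the kernel and cokernel of the natural map $\bar{H}^1_c \to \bar{H}^1_{cusp}$ at primes outside $S$. A secondary delicacy is the precise $p$-adic bound on $B_{k,\chi^2}$; fortunately, the slack of $p^{j_\chi}$ in the target exponent is enough to absorb it without optimization.
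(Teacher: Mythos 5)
Your proposal follows essentially the same route as the paper: both reduce to the cohomological expression \eqref{integral-expression}, bound the denominator of the cuspidal class by $p^{j_\chi(2n-m)}$ via Lemma~\ref{lemma Clebsch Gordan}, the Eisenstein class by $p^{c_{j_\chi}}$ via \eqref{E beta denom}, and the Dirichlet $L$-value by $p^{j_\chi(2n-2m+3)}$ via the special value formula \eqref{functional equation} together with a $p$-integrality bound on the twisted Bernoulli numbers (the paper gets the extra $p^{j_\chi}$ from the standard formula relating $B_{k,\bar\chi^2}$ to ordinary Bernoulli numbers plus von Staudt--Clausen with $p-1>2n+2$, where you invoke Carlitz-type integrality, but the resulting exponent is the same). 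The integral Poincar\'e pairing subtlety you flag at the end is likewise passed over by the paper, which simply asserts that these are integrals of classes with specifiable denominator over an integral cycle.
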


\begin{proof}
Indeed, this follows from the fact that by \eqref{integral-expression} the special value in the statement of the proposition has a 
cohomological description in terms of integrals of the form
$$
\int_{[\Gamma^\beta_0 (N) \backslash \H]} S^*_\beta \delta_{2n-2m}^\circ (f) \wedge E_{2n-m+2}^\beta.
$$
These are integrals of cohomology classes with specifiable denominator over an integral cycle, hence belong to $\calo_E$ with specifiable denominator. 
The size of the denominator can be computed from \eqref{S beta denom} and \eqref{E beta denom}, taking $j = j_\chi$, and the fact
that the Dirichlet $L$-value in \eqref{integral-expression} satisfies $L^\circ(2n-2m+2, \bar\chi^2) \in \frac{1}{p^{j_\chi(2n-2m+3)}} \calo_E$
(which in turn follows easily from a special values result like \eqref{functional equation}, noting 
that the corresponding twisted Bernoulli numbers lie in $\frac{1}{p^{j_\chi}} \calo_E$ by a standard formula for these numbers involving the usual Bernoulli
numbers up to $B_{2n+2}$, and by the well-known result of von Staudt-Clausen which says that $p$ does not divide the denominators of these 
Bernoulli numbers since $p-1 > 2n+2$, by the definition of the set $S$).
\end{proof}

\section{Constructing bounded distributions}
  \label{section bounded distributions}

Finally, we now define our $p$-adic distribution by the formula
$$
\mu^\circ_{2n-m+2} = \frac{1}{\Omega(f) \Omega_\infty} \cdot \mu_{2n-m+2}.
$$
These distributions are certainly defined whenever $2n-m+2 \ge k+2$ which is the same as $m \le n-2$, but may possibly
be defined for all $0 \leq m \leq n$, by analytic continuation. 

We wish to show that $\mu^\circ_{2n-m+2}$ is a bounded distribution and hence a measure. To this end we recall 
the notion of abstract Kummer congruences. 

\begin{theorem}[Abstract Kummer congruences]
\label{thm: AKC} 
 Let $Y = \Z_p^\times$, let $\calo_p$ be the ring of integers of $\C_p$ and let $\{f_i\}$
be a collection of continuous functions in
$C(Y, \calo_p)$ such that the $\C_p$-linear span of $\{f_i\}$ is dense in $C(Y, \C_p)$. Let $\{a_i\}$ be a system of elements 
with $a_i \in \calo_p$. Then the existence of an $\calo_p$-valued measure $\mu$ on $Y$ with the property
$$\int_Y \>  f_i \> d\mu = a_i$$
is equivalent to the following congruences: for an arbitrary choice of elements $b_i \in \C_p$ almost all zero, and
for $n \geq 0$, we have
     $$ \sum_i b_i f_i (y) \in p^n \calo_p, \text{ for all }y \in Y \implies \sum_i b_i a_i \in p^n \calo_p. $$ 
\end{theorem}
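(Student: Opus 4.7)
The plan is to verify the two implications separately. The forward direction (existence implies congruences) is routine: assuming a measure $\mu$ with $\int_Y f_i\, d\mu = a_i$ exists, any combination $g := \sum_i b_i f_i$ with $b_i \in \C_p$ almost all zero satisfying $g(Y) \subset p^n \calo_p$ lies in $p^n C(Y, \calo_p)$, so $\sum_i b_i a_i = \int_Y g\, d\mu \in p^n \calo_p$ by the $\calo_p$-valuedness of $\mu$.

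For the converse, my strategy is to define a $\C_p$-linear functional on the span $V := \mathrm{span}_{\C_p}\{f_i\} \subset C(Y, \C_p)$ and extend it by continuity to all of $C(Y, \C_p)$. I would set
$$\Phi\!\left(\sum_i b_i f_i\right) := \sum_i b_i a_i$$
and first check well-definedness: if $\sum_i b_i f_i \equiv 0$ as a function on $Y$, then $\sum_i b_i f_i(y) \in p^n \calo_p$ for every $n \geq 0$ and every $y \in Y$, so the hypothesis forces $\sum_i b_i a_i \in \bigcap_{n \geq 0} p^n \calo_p = \{0\}$. Next, I would verify continuity in the sup norm: if $f = \sum_i b_i f_i \in V$ satisfies $\|f\|_\infty \leq p^{-n}$ for an integer $n \geq 0$, then the values of $f$ lie in $p^n \calo_p$, and applying the congruence at level $n$ gives $\Phi(f) \in p^n \calo_p$, that is, $|\Phi(f)|_p \leq p^{-n}$. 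Rounding $\|f\|_\infty$ to the nearest power of $p$ yields boundedness of $\Phi$ on $V$ in a form sufficient for extension.

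By the assumed density of $V$ in $C(Y, \C_p)$, the map $\Phi$ extends uniquely to a continuous $\C_p$-linear functional on $C(Y, \C_p)$, which by construction sends $C(Y, \calo_p)$ into $\calo_p$. This is precisely the datum of an $\calo_p$-valued measure $\mu$ on $Y$, and it satisfies $\int_Y f_i\, d\mu = \Phi(f_i) = a_i$ for every $i$.

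The main subtlety is the well-definedness step: since the $f_i$ are not assumed linearly independent, the same element of $V$ may admit multiple representations $\sum b_i f_i$, and only the full strength of the Kummer congruences applied at \emph{every} level $n$ (together with the Hausdorff property of $\C_p$) ensures that $\sum b_i a_i$ depends only on the function $\sum b_i f_i$. The boundedness step, by contrast, uses only the congruence at a single level determined by the sup norm.
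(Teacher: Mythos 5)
The paper does not actually prove this theorem; it is recalled as a standard result (going back to Katz and used by Courtieu--Panchishkin), so there is no in-paper argument to compare against. Your proposal is the standard proof and is correct: the forward direction is immediate from linearity and $\calo_p$-valuedness, and the converse correctly identifies the two points that need care --- well-definedness of $\Phi$ on the (not necessarily independent) spanning set, settled by applying the congruences at every level $n$ and using $\bigcap_n p^n\calo_p = \{0\}$, and boundedness on the span, settled by the congruence at a single level. The only detail worth spelling out is that after extending by density, $\calo_p$-valuedness of the extension on $C(Y,\calo_p)$ follows because an ultrametric approximating sequence $f_k \to f$ with $\|f\|_\infty \le 1$ eventually satisfies $\|f_k\|_\infty \le 1$, so $\Phi(f_k) \in \calo_p$ and $\calo_p$ is closed; your ``rounding'' remark covers this in spirit.
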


We apply this theorem with $f_i$ the collection of Dirichlet characters $\chi$ of $(\Z/p^j\Z)^\times$, 
for all $j \geq 1$, thought of as functions of $Y = \Z_p^\times$, and with 
$a_\chi \in \calo_p$ the values of $\mu(\chi)$, for a given $\C_p$-valued distribution $\mu$ on $Y$. 
To prove that $\mu$ is an $\calo_p$-valued measure on $Y$, it suffices to prove Kummer congruences of the more specialized form 
\begin{equation}\label{eq: Kummer}
\sum_\chi \chi^{-1}(a) \chi (y)   \in p^{j-1} \calo_p, \text{ for all } y \in Y \implies 
      \sum_\chi \chi^{-1}(a) \mu(\chi)  \in p^{j-1} \calo_p,
\end{equation}
where $\chi$ varies over all characters mod $p^j$, for a fixed $j \geq 1$, and where the first congruence 
in \eqref{eq: Kummer} follows from the identity $ \sum_\chi \chi^{-1}(a) \chi  = \phi(p^j) \id_{a + p^j \Z_p} $, for
$\id_{a + p^j \Z_p}$ the characteristic function of the coset $a + p^j \Z_p \subset \Z_p^\times$. Indeed, then the second
congruence in \eqref{eq: Kummer} shows that  
$\mu$ is $\calo_p$-valued on $\id_{a + p^j \Z_p}$, whence on all $\calo_p$-valued step functions on $\Z_p$, whence on 
all $\calo_p$-valued continuous functions on $Y$. 

\begin{claim} 
  The Kummer congruences \eqref{eq: Kummer} hold for $\mu = \mu^\circ_{2n-m+2}$, for $m \leq n - 2$ even. 
\end{claim}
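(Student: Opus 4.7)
The plan is to reduce the Kummer congruences \eqref{eq: Kummer} to an integrality statement for $\mu^\circ_{2n-m+2}$ on indicator functions of cosets, and then to establish this integrality using the cohomological expression of \eqref{integral-expression} together with the denominator bounds of Lemma~\ref{lemma Clebsch Gordan} and Proposition~\ref{prop rational}.

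First, the orthogonality relation $\sum_{\chi \bmod p^j} \chi^{-1}(a) \chi = \phi(p^j) \id_{a + p^j \Z_p}$, combined with $\phi(p^j) = p^{j-1}(p-1)$ and $(p-1) \in \calo_p^\times$, reduces \eqref{eq: Kummer} to showing that $\mu^\circ_{2n-m+2}(a + p^j \Z_p) \in \calo_p$ for every $a \in (\Z/p^j\Z)^\times$ and every $j \geq 1$. The hypothesis $m \leq n-2$ keeps us inside the region of absolute convergence of $G(s,f)$, so the cohomological computations of Section 3 all remain valid.

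Second, I would repeat the unwinding computation leading to \eqref{integral-expression}, replacing $\chi$ by the indicator $\id_{a + p^j\Z_p}$ (equivalently, summing \eqref{integral-expression} against $\chi^{-1}(a)$ over all $\chi$ mod $p^j$). After symmetrization in $a \mapsto -a$ this should yield a cohomological expression of the shape
$$
\mu^\circ_{2n-m+2}(a + p^j \Z_p) = \frac{p^{j(2n-m+1)}}{\kappa^j} \cdot C_j \cdot \bigl[ \langle S^*_\beta(\delta^\circ_{2n-2m}(f)),\, E^\beta_{2n-2m+2}(0) \rangle + \langle S^*_{-\beta}(\delta^\circ_{2n-2m}(f)),\, E^{-\beta}_{2n-2m+2}(0) \rangle \bigr],
$$
where $\beta = a\sqrt{-D}/2p^j$ and $C_j$ is a constant collecting the local factor coefficients $B_i$ of $H(X)$ and a Dirichlet $L$-factor at $p^j$ produced by the sum over characters.

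Third, I would deduce integrality by tracking denominators. By Lemma~\ref{lemma Clebsch Gordan}, the class $S^*_\beta(\delta^\circ_{2n-2m}(f))$ contributes at most $p^{j(2n-m)}$ in the denominator; by Proposition~\ref{prop rational} and \eqref{E beta denom}, the Eisenstein class $E^\beta_{2n-2m+2}(0)$ contributes at most $p^{c_j}$; and since $p \not\in S$ the Poincar\'e pairing preserves $\calo_E$, so the whole pairing lies in $p^{-j(2n-m)-c_j}\calo_E$. Multiplying by the prefactor $p^{j(2n-m+1)}/\kappa^j$ (with $\kappa$ a $p$-adic unit) leaves at most $p^{c_j - j}$ in the denominator, which must be absorbed by the constant $C_j$ and by further enlarging $\calo_E$ and the excluded set $S$.

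The main obstacle is obtaining a linear bound $c_j \leq j + O(1)$ for the Eisenstein-class denominator (and an analogous bound for $C_j$). Tracing the explicit formulas for $a_0$ and $a_{l''}$ in the proof of Proposition~\ref{prop rational}, the potential denominators arise from the factor $(Np^{2j})^k$ in the $q$-expansion, from Gauss sums $G(\psi)$ of characters $\psi$ mod $Np^{2j}$, and from generalized Bernoulli numbers $B_{k,\bar\psi}$; the last of these are $p$-units thanks to the von Staudt–Clausen bound enforced by the condition $p > 2n+2$ built into $S$, while the first two require a more delicate analysis of cancellations between the various cyclotomic contributions to the Fourier coefficients (along the lines of the argument sketched in the commented-out material in Section~\ref{section Eisenstein}) to produce a bound of the required form.
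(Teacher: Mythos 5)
The paper does not actually prove this Claim: the text immediately following it states that the best bound available, namely \eqref{eq: interpolation}, only places $\sum_\chi \chi^{-1}(a)\,\mu^\circ_{2n-m+2}(\chi)$ in $\frac{1}{p^{j(2n-2m+2)+c_j}}\calo_p$, observes that this is ``still quite far from the integrality claimed in part a)'', and then proceeds \emph{assuming} the Claim. So there is no argument in the paper to compare yours against; a complete proof would be genuinely new content, and the authors themselves only indicate that they expect it to follow the pattern of Panchishkin's Rankin--Selberg Kummer congruences.

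Your proposal reproduces the correct first reduction (integrality of $\mu^\circ_{2n-m+2}$ on the cosets $a+p^j\Z_p$ is equivalent to \eqref{eq: Kummer}, since $p-1$ is a $p$-adic unit), but it then has a genuine gap at exactly the point where the paper stops. First, the ``constant'' $C_j$ in your second step is not harmless: it carries the Dirichlet $L$-value $L^\circ(2n-2m+2,\bar\chi^2)$, which by the paper's own estimate lies only in $\frac{1}{p^{j(2n-2m+3)}}\calo_E$, and the Gauss sum $G(\bar\chi^2)$, which has positive $p$-adic valuation; so $C_j$ enlarges the denominator rather than absorbing the leftover $p^{c_j-j}$. (Note also that $P_s(b)$ is built from the coefficients $d(r)$ of $G(s,f)$, which already include the $L_N$ Euler factor, so the coset value is not literally a single Eisenstein pairing of the shape you write.) Second, ``further enlarging $\calo_E$ and the excluded set $S$'' cannot close the gap: $E$ is a $p$-adic field, so no finite enlargement makes $1/p$ integral, and $p$ itself --- the working prime --- can never be placed in $S$. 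The decisive point, a cancellation among the $p$-power denominators of the individual values $\mu^\circ_{2n-m+2}(\chi)$ when they are averaged against $\chi^{-1}(a)$ over all $\chi$ mod $p^j$, is precisely what you defer to ``a more delicate analysis of cancellations''; that deferral is the entire content of the Claim, so the proposal is an outline of the problem rather than a proof of it.
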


In order to prove this claim we must show that the second sum 
$\sum_\chi \chi^{-1}(a) \mu^\circ_{2n-m+2}(\chi)$ in \eqref{eq: Kummer} 
should firstly a) be {\it integral} and secondly b) be in $p^{j-1} \calo_p$. 
Now \eqref{eq: integrality} shows that for any even 
character $\chi$ and $m$ even:
\begin{equation}\label{eq: interpolation}
\int \chi\ d\mu^\circ_{2n-m+2} = \frac{2 p^{j_\chi (2n-m+1) }G(\bar \chi^2) }{\kappa^{j_\chi}}  \frac{G(\chi) G(2n-m+2, \bar \chi, f)}{\Omega(f) G(\bar \chi^2) \Omega_\infty} \in \frac{1}{p^{j_\chi(2n-2m+2)+c_{j_\chi}}} \calo_E,
\end{equation}
at least if $\kappa$ is a unit, which we have assumed. For odd characters $\chi$, the integral above vanishes. Thus \eqref{eq: interpolation} shows
that the second sum above is in $\frac{1}{p^{j(2n-2m+2)+c_j}} \calo_p$, with $c_j = \max_\chi c_{j_\chi}$.  This is still quite 
far from the integrality claimed in part a). Assuming that part a) holds, one must then further prove the congruence in part b).

In any case, assuming the Claim, we have that  $\mu^\circ_{2n-m+2}$ is a measure, for $0 \leq m \leq n-2$ even. 


Let $x_p :  \Z_p^\times \rightarrow \calo_p$ be the usual embedding. 
We now wish to glue the measures $\mu^\circ_{2n-m+2}$, for $0 \leq m$ even, into one measure $\mu^\circ$ satisfying 
(see \cite[Lemma 4.4]{Coates-PR}, noting $q(V)$ there equals $1$)
\begin{equation}
  \label{eq: glue}
  \int_{\Z_p^\times} \chi  \ d\mu^\circ = (-1)^{m/2}  \int_{\Z_p^\times} x_p^m \chi \ d\mu^\circ_{2n - m +2}.
\end{equation}
To do this, we again appeal to the abstract Kummer congruences in the theorem above.
For the  $f_i$, we consider a slightly larger class of functions than the Dirichlet characters $\chi$ above, namely those 
of the form $x_p^{-m} \cdot \chi$, for $0 \leq m \leq n$, with $m$ even. We set $a_{m,\chi} = (-1)^{m/2} \mu^\circ_{2n-m+2}(\chi) \in \calo_p$,
which should be equal to $\mu^\circ(x_p^{-m} \chi)$, by \eqref{eq: glue} above.
We now assume that 

\begin{claim}
  The $a_{m,\chi}$ satisfy the abstract Kummer congruences: 
  $$ \sum_{m, \chi} \ b_{m, \chi} (x_p^{-m} \chi) (y) \in p^{j-1} \calo_p, \text{ for all } y \in Y  \implies \sum_{m,\chi} b_{m,\chi} a_{m, \chi} \in p^{j-1} \calo_p.$$
\end{claim}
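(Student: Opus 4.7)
The plan is to verify the joint abstract Kummer congruences for $\{a_{m,\chi}\}$ by combining the single-variable Kummer congruences of the previous Claim, applied to each $\mu^\circ_{2n-m+2}$ separately, with a vertical compatibility relating these measures at different even weights $m \in [0, n-2]$.

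First I would extract the leading-order content of the hypothesis. For a fixed residue $a \in (\Z/p^j\Z)^\times$ and $y = a(1 + p^j u) \in Y$ with $u \in \Z_p$, every Dirichlet character $\chi$ of conductor dividing $p^j$ satisfies $\chi(y) = \chi(a)$, and $y^{-m} \equiv a^{-m} \pmod{p^j}$ (since $\binom{-m}{k}$ is integral, all higher Taylor coefficients in $u$ are in $p^j\calo_p$). Setting $u = 0$ in the hypothesis therefore yields
$$\sum_{m,\chi} b_{m,\chi}\, a^{-m}\chi(a) \in p^{j-1}\calo_p \quad \text{for every } a \in (\Z/p^j\Z)^\times.$$
Applying the previous single-variable Claim to each $\mu^\circ_{2n-m+2}$ converts such $a$-bounds into bounds on linear combinations of the values $\mu^\circ_{2n-m+2}(\chi)$, but only separately for each fixed $m$. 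Assembling them into a single congruence on $\sum_{m,\chi} b_{m,\chi} a_{m,\chi}$ requires comparing values of the measures at different weights.

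The key additional ingredient, and the main obstacle, is a vertical compatibility of the form
$$\mu^\circ_{2n-m+2}(\chi) \equiv (-1)^{(m'-m)/2}\, \mu^\circ_{2n-m'+2}(\chi \cdot \omega^{m'-m}) \pmod{p^{j_\chi - 1}\calo_p},$$
for even $m, m' \in [0, n-2]$ and Dirichlet characters $\chi$ of $p$-power conductor, where $\omega$ is the Teichm\"uller character. In view of the cohomological expression underlying \eqref{eq: integrality}, this reduces to showing that the integral
$$\int_{[\Gamma_0^\beta(N)\backslash \H]} S_\beta^*\delta^\circ_{2n-2m}(f) \wedge E^\beta_{2n-2m+2}(0)$$
changes by an amount in $p^{j_\chi - 1}\calo_E$ when $m$ is replaced by $m'$ and $\chi$ is twisted by $\omega^{m'-m}$. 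This is the analogue of the horizontal--vertical compatibilities in Hida families, and requires controlling uniformly in $m$ both the Clebsch--Gordan projection $\delta^\circ_{2n-2m}(f)$ (via Lemma~\ref{lemma Clebsch Gordan}) and the integral structure of the Eisenstein class $E^\beta_{2n-2m+2}(0)$ (via Proposition~\ref{prop rational}). Once this vertical compatibility is established with the stated precision, combining it with the leading-order reduction above and the single-variable Kummer congruences delivers the desired congruence $\sum_{m,\chi} b_{m,\chi} a_{m,\chi} \in p^{j-1}\calo_p$.
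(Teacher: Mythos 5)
There is a genuine gap, and it is worth noting first that the paper itself does not prove this Claim: the text reads ``We now assume that [the Claim holds]'' and later says only that the authors ``expect that the proof of these Kummer congruences should be similar to'' those of Panchishkin for the Rankin product $L$-function. So there is no proof in the paper to match your argument against; the Claim is left as an open assumption. Your proposal, however, does not close it either. The central step --- the ``vertical compatibility'' congruence
$\mu^\circ_{2n-m+2}(\chi) \equiv (-1)^{(m'-m)/2}\mu^\circ_{2n-m'+2}(\chi\,\omega^{m'-m}) \pmod{p^{j_\chi-1}\calo_p}$ --- is asserted as ``the key additional ingredient'' and then simply assumed (``Once this vertical compatibility is established\ldots''). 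But this family of congruences between $L$-values at different weights \emph{is} the essential content of the Claim; reducing the Claim to it is not progress unless you can prove it, and nothing in the paper's estimates does so. Indeed the only quantitative control available, \eqref{eq: integrality} and \eqref{eq: interpolation}, gives values in $p^{-j_\chi(2n-2m+2)-c_{j_\chi}}\calo_E$, which the paper itself describes as ``still quite far from the integrality claimed in part a)'' of even the single-weight Claim.

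There are two further structural problems. First, your plan invokes ``the single-variable Kummer congruences of the previous Claim,'' but that previous Claim is likewise unproven in the paper (and the paper points out that the present Claim \emph{implies} it, by taking $b_{m',\chi}=\chi^{-1}(a)$ for $m'=m$ and $0$ otherwise --- so your logical flow runs in the harder direction and cannot simply cite it). Second, the separation step does not work as described: from the hypothesis evaluated at $y=a$ you obtain a bound on the \emph{total} sum $\sum_{m,\chi} b_{m,\chi}a^{-m}\chi(a)$, but the single-weight congruences require each inner sum $\sum_\chi b_{m,\chi}\chi(a)$ to be small separately, which does not follow because the functions $x_p^{-m}\chi$ for different $m$ collide modulo powers of $p$ and can cancel against one another. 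Handling exactly this cancellation is what the inter-weight congruences are for, so the argument is circular as it stands. What you have is a reasonable road map (and it is consistent with the Panchishkin-style strategy the authors gesture at), but not a proof.
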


It would then follow from Theorem~\ref{thm: AKC} that there is a measure $\mu^\circ$ such that \eqref{eq: glue} holds. Note that the Kummer congruences 
in the latter claim actually imply the ones in the former claim for $\mu_{2n-m+2}^\circ$, 
by choosing the $b_{m', \chi} = \chi^{-1}(a)$ if $m'=m$, and $b_{m', \chi} = 0$ 
if $m' \neq m$.   We expect that the proof of these Kummer congruences should be similar to the Kummer congruences 
proved by Panchishkin in his construction of the $p$-adic Rankin product $L$-function attached to two cusp forms $f$ and $g$, described
in detail in \cite{Pan} (see also \cite{CouPan}, and \cite{Ghate-Vangala} where a sign similar to the one occurring in \eqref{eq: glue} is corrected).  

Since $\mu^\circ$ and $\mu^\circ_{2n+2}$ agree on a dense set of functions, namely all $\chi$, the measure 
$\mu^\circ$ is just the measure $\mu^\circ_{2n+2}$.      
We now define the $p$-adic Asai $L$-function as the Mellin transform of the measure  $\mu^\circ = \mu^\circ_{2n+2}$:
$$
L_p (\chi) = \int_{\Z_p^\times} \chi(a) \ d\mu^\circ_{2n+2}, \quad \text{ for all }  \chi: \Z_p^\times \to \C_p^\times.
$$

\hskip 1cm

{\bf \noindent Acknowledgements:} The first author was supported by SERB grants EMR/2016/000840 and MTR/2017/000114. The second and third authors thank 
T.N. Venkataramana for useful conversations. A version of this paper has existed since about 2016. Recently, Loeffler and Williams \cite{LW19}
have announced a construction of the $p$-adic Asai $L$-function attached to a Bianchi cusp form using a method involving Euler systems.

\end{document}